\documentclass[a4paper]{article}
\usepackage{graphicx} 
\usepackage{authblk}
\usepackage[T1]{fontenc}
\usepackage{hidde-standard-preamble} 
\usepackage{thm-restate}

\newcommand{\atw}{\alpha\text{-}\mathsf{tw}}
\newcommand{\tw}{\mathsf{tw}}

\DeclareEmphSequence{\itshape\color{blue}}

\title{Tree-independence number of $K_{1,d}$-free graph classes}
\author[1,2]{Kenny Be\v{s}ter \v{S}torgel\thanks{Supported in part by the Slovenian Research and Innovation Agency, research program P1–0383 and research project N1-0370}}
\author[3,4]{Mujin Choi\thanks{Supported by the Institute for Basic Science (IBS-R029-C1)}}
\author[5]{Hidde Koerts}
\author[6]{Đorđe Vasić}

\affil[1]{Faculty of Information Studies in Novo mesto, Slovenia}
\affil[2]{FAMNIT, University of Primorska, Slovenia}
\affil[3]{Department of Mathematical Sciences, KAIST, South Korea}
\affil[4]{Discrete Mathematics Group, Institute for Basic Science, KAIST, South Korea}
\affil[5]{Department of Combinatorics and Optimization, University of Waterloo, Canada}
\affil[6]{Faculty of Mathematics and Physics, University of Ljubljana, Slovenia}

\begin{document}

\maketitle
\renewcommand{\thefootnote}{}
\footnotetext{Emails: kenny.storgel@fis.unm.si, mujinchoi@kaist.ac.kr, hkoerts@uwaterloo.ca, xv84964@student.uni-lj.si}

\begin{abstract}
    In this paper, we investigate the tree-independence number of graph classes that do not contain $K_{1,d}$ as an induced subgraph.
    Dallard et al. conjectured that for any positive integer $d$ and any planar graph $H$, the class of all $K_{1,d}$-free graphs without $H$ as an induced minor has bounded tree-independence number.
    Our main contribution towards this conjecture is showing that the conjecture holds for outerstring graphs.
    Additionally we give linear and quadratic bounds for the tree-independence number of various $K_{1,d}$-free graph classes, sharpening previous bounds. 
    Finally, we bound the tree-independence number of $K_{2,d}$-free graphs additionally forbidding holes of length at least $5$.

\end{abstract}

\section{Introduction}\label{sec:introduction}

In this paper we study a relatively recent graph width parameter called the \emph{tree-independence number}, denoted by $\alpha\text{-}\mathrm{tw}$ in certain graph classes excluding a star (i.e., a graph $K_{1,d}$ for some integer $d$) as an induced subgraph. Tree-independence number was introduced independently by Yolov~\cite{MR3775804} and Dallard, Milani\v{c}, and \v{S}torgel~\cite{DBLP:journals/jctb/DallardMS24}.
Roughly speaking, it measures how similar a graph is to being chordal.
It is defined similarly as treewidth, except that instead of measuring the cardinality of each bag of a tree decomposition (minus $1$), we measure the independence number of the graph induced by the set of vertices contained in each bag.
It is known that boundedness of tree-independence number gives a sufficient condition for polynomial-time solvability of many algorithmic problems related to independent sets, including the \textsc{Maximum Weight Independent Packing} problem and, as a consequence, the \textsc{Maximum Weight Independent Set} and the \textsc{Maximum Weight Induced Matching} problems (see \cite{DELIMA2026,DBLP:journals/jctb/DallardMS24,MR3775804}).

Given two graphs $G$ and $H$, we say that $G$ is \emph{$H$-free} if it does not contain an induced subgraph isomorphic to~$H$. Moreover, for a class $\mathcal{F}$ of graphs, we say that $G$ is \emph{$\mathcal{F}$-free} if for every graph $F\in \mathcal{F}$, $G$ is $F$-free. Similarly, we say that a graph class $\mathcal{G}$ is $H$-free if every graph $G\in \mathcal{G}$ is $H$-free, and $\mathcal{G}$ is $\mathcal{F}$-free if every graph $G\in \mathcal{G}$ is $\mathcal{F}$-free.

Recently, a result giving a complete characterisation of graph classes with bounded tree-independence number, defined by forbidding a finite set of graphs as an induced subgraph, was shown by Hajebi and Spirkl~\cite{hajebi2026treealphaexcludingfinitelygraphs}.
Let $\mathcal{S}$ be the family of all graphs whose every connected component is a tree with at most three leaves.

\begin{theorem}[Hajebi and Spirkl~\cite{hajebi2026treealphaexcludingfinitelygraphs}]
    \label{thm:hajebi-spirkl}
    Let $\mathcal{F}$ be a finite set of graphs. Then the class of $\mathcal{F}$-free graphs has bounded tree-independence number if and only if $\mathcal{F}$ contains a complete bipartite graph, a graph from $\mathcal{S}$ and a line graph of a graph from $\mathcal{S}$.
\end{theorem}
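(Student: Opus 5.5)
This is a characterisation with two directions. Necessity is the easy one, and I would settle it first. Sufficiency is the hard direction.

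\textbf{Necessity.} I exhibit three families of graphs with unbounded tree-independence number; each family must be excluded by some member of $\mathcal{F}$.
\begin{itemize}
\item Complete bipartite graphs $K_{n,n}$ have $\alpha\text{-}\mathrm{tw}$ growing with $n$. In any tree decomposition some bag contains a large part of both sides, and hence a large independent set. They contain no graph from $\mathcal{S}$ except small ones with few edges; more importantly, their induced subgraphs are complete bipartite. So if $\mathcal{F}$ is finite and $\mathcal{F}$-freeness must kill this family, some member is complete bipartite (an edgeless graph counts as $K_{0,n}$, or one handles it via Ramsey).
\item Subdivided walls, or long subdivisions of large walls, have large treewidth and are triangle-free. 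Hence they have large $\alpha\text{-}\mathrm{tw}$, since in triangle-free graphs $\alpha\text{-}\mathrm{tw}$ and treewidth are tied. If the subdivisions are chosen longer than every graph in $\mathcal{F}$, their small induced subgraphs lie in $\mathcal{S}$. So $\mathcal{F}$ must meet $\mathcal{S}$.
\item Line graphs of these subdivided walls again have unbounded $\alpha\text{-}\mathrm{tw}$. Their small induced subgraphs are line graphs of graphs in $\mathcal{S}$, so $\mathcal{F}$ must contain such a line graph.
\end{itemize}

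\textbf{Sufficiency.} Suppose $\mathcal{F}$ contains $K_{t,t}$, some $S\in\mathcal{S}$ and $L(S')$ with $S'\in\mathcal{S}$. I would reduce to a treewidth statement via known tools. First, by the Dallard–Milanič–Štorgel framework, bounded $\alpha\text{-}\mathrm{tw}$ follows from a balanced-separator argument: it suffices to show that every weight function has a balanced separator dominated by, or of bounded independence within, a bounded number of cliques or neighbourhoods.

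Second, I would use the Abrishami–Chudnovsky–Hajebi–Spirkl style ``central bag'' method. Apply induction and decompose along star cutsets, or more generally cutsets with bounded-$\alpha$ cores. In the central bag no such cutset exists, and there I show that a large-$\alpha\text{-}\mathrm{tw}$ obstruction yields a large induced wall-like structure. By the induced grid theorem for $K_{t,t}$-free graphs with bounded degree after clique contraction (Korhonen, Abrishami et al.), this structure is a subdivided wall or the line graph of one. Either contains every graph of $\mathcal{S}$, respectively every line graph of a graph of $\mathcal{S}$, of bounded size, which is a contradiction.

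\textbf{Main obstacle.} The step I expect to be hardest is the central-bag step: passing from large tree-independence number to an \emph{induced} obstruction when cliques may be large. Bounded-degree arguments fail here, so one must first cover high-degree or clique-heavy regions. Concretely, I would show that in a $K_{t,t}$-free graph, large $\alpha\text{-}\mathrm{tw}$ forces a large $\alpha$-well-linked set. From that set I would extract many pairwise anticomplete long paths via Gyárfás-type path arguments, which avoid large stars by using the excluded $S$ (it contains a long path). Forcing the grid-like connections while keeping them induced is where the line-graph alternative must appear. Handling this dichotomy (wall versus line graph of wall) cleanly is the crux, and I would follow the Hajebi–Spirkl decomposition closely here.
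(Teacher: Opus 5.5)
The paper does not prove this theorem; it only quotes it from Hajebi and Spirkl. So I am judging your proposal on its own.

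\textbf{Necessity.} This direction is sound, with one correction for $K_{n,n}$ with sides $A,B$. It is not that some bag meets both sides in large sets. The correct argument: if the subtrees $T_a$ ($a\in A$) pairwise intersect, the Helly property gives a bag containing all of $A$. Otherwise two of them are disjoint, every $T_b$ ($b\in B$) meets both and so contains the tree edge separating them, and hence some bag contains all of $B$. The wall and line-graph families work as you say. Clique number at most $3$ plus Ramsey turns large treewidth into large tree-independence number, and long subdivisions force all small induced subgraphs into $\mathcal{S}$, respectively into line graphs of members of $\mathcal{S}$.

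\textbf{Sufficiency.} Here there is a genuine gap, and the route you outline cannot work as stated. Your plan uses $S$ and $L(S')$ only at the very end, to contradict a large induced subdivided wall or the line graph of one. The plan would therefore show that every $K_{t,t}$-free graph of huge tree-independence number contains such a structure, and that is false. The (theta, triangle)-free layered wheels of Sintiari and Trotignon are $K_{2,3}$-free (as $K_{2,3}$ is a theta), triangle-free, and of unbounded treewidth. Hence they have unbounded tree-independence number, since a triangle-free bag with independence number at most $k$ has fewer than $R(3,k+1)$ vertices. Yet they contain no large induced subdivided wall (large subdivided walls contain thetas) and no line graph of a large subdivided wall (these contain triangles).

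A correct proof must use that $S$ and $L(S')$ are \emph{fixed} finite graphs and find one of them directly. That is the actual content of the theorem, and ``follow the Hajebi--Spirkl decomposition closely'' defers exactly this step.

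The tools you invoke also fail in this class. Cliques and stars are $C_4$-free and $P_4$-free. So if $t\ge 2$ and both $S$ and $L(S')$ contain an induced $P_4$, the $\{K_{t,t},S,L(S')\}$-free graphs include arbitrarily large cliques and arbitrarily large stars. Consequently:
\begin{itemize}
\item Korhonen's theorem, which needs bounded maximum degree, is unavailable, and no clique contraction restores bounded degree while preserving induced containment.
\item Star cutsets and separators dominated by a few neighbourhoods have unbounded independence number. They are cheap only in $K_{1,d}$-free graphs, which is the setting of this paper.
\item $S$-freeness does not keep large stars away in your Gy\'arf\'as-type step.
\end{itemize}
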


In addition, \cref{thm:hajebi-spirkl} confirms the validity of several conjectures about tree-independence number stated by Dallard et al. in~\cite{dallard2024treewidthversuscliquenumber4}, including a conjecture that the class of $\{P_k, K_{d,d}\}$-free graphs has bounded tree-independence number.

On the other hand, by a result of Chudnovsky and Trotignon~\cite{ChudnovskyTrotignon2025}, \cref{thm:hajebi-spirkl} does not extend to the setting of hereditary classes defined by infinite sets of forbidden induced subgraphs. 

Note that the above results do not give much insight in terms of the upper bound on the tree-independence number for particular graph classes neither in terms of finite nor infinite sets of forbidden induced subgraphs.

The aim of this paper is to focus on specific upper bounds for various graph classes. Specifically, we will consider graph classes forbidding $K_{1,d}$ as an induced subgraph.
The importance of $K_{1,d}$-free graph classes can be seen as follows. Korhonen~\cite{KorhonenGrid} showed that an induced variant of the celebrated Grid-Minor Theorem by Robertson and Seymour~\cite{GraphMinorV} holds for graphs with bounded maximum degree. 

\begin{theorem}[\cite{KorhonenGrid}]
    \label{thm:korhonen}
    There exists a function $f: \mathbb{N}^2\rightarrow \mathbb{N}$ such that for every positive integer $k$ and every graph $G$, if $\tw(G) > f(\Delta(G), k)$, then $G$ contains the $(k\times k)$-grid as an induced minor.
\end{theorem}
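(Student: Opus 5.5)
This is Korhonen's theorem, which the paper cites rather than proves. The plan below reconstructs its standard proof.

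The argument combines the Grid-Minor Theorem of Robertson and Seymour with a degree-dependent way of turning a large grid minor into a large grid induced minor. Fix $\Delta=\Delta(G)$ and $k$. Since bounded degree lets us afford large losses, we first pick a large grid size $K=K(\Delta,k)$ and set $f(\Delta,k)$ to be the grid-minor function evaluated at $K$. Then $\tw(G)>f(\Delta,k)$ gives a model of the $(K\times K)$-grid as a minor of $G$, i.e.\ connected branch sets $B_v$ with edges between the sets of adjacent grid vertices. The remaining work is to extract a $(k\times k)$-grid induced minor from this model. Losing a factor depending on $\Delta$ and $k$ in $K$ is acceptable.

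Step one is to clean the model. We minimize the total size of the branch sets, so that each $B_v$ is a minimal connected set meeting its (at most four) required neighbouring branch sets. Each $B_v$ is then a subdivided tree with at most four leaves. In the minor model, the only obstacles to inducedness are extra edges of $G$ between branch sets of non-adjacent grid vertices. Bounded degree limits these locally, but a long branch set could still have many vertices, each with up to $\Delta$ extra neighbours, spread across far-apart branch sets.

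Step two handles these far-reaching edges. We pass to a sparse sub-grid: choose grid vertices spaced about $c(\Delta,k)$ apart and route the $k\times k$ grid through corridors, contracting whole corridors. The natural tool is a Ramsey-type or flow argument that uses treewidth again. Consider the graph $H$ on grid vertices in which $u\sim v$ whenever $B_u$ and $B_v$ are adjacent in $G$. If the model is chosen minimal with respect to a suitable potential, for instance the lexicographic order of path lengths, then long shortcut edges would allow a shorter model. So every extra edge joins branch sets at bounded grid distance, with the bound depending on $\Delta$ only after a refinement. Alternatively, we can use the fact that bounded degree plus large treewidth yields a large wall as a subgraph by a result of Kreutzer–Tazari type, and then take an induced subdivided wall by shortcutting along chords. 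Each shortcut is local because the degree is bounded.

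Step three finishes the construction. With only local extra adjacencies, of bounded range $r$, we take the $(k\times k)$-grid on grid vertices spaced more than $r$ apart. Each new branch set is the union of a block of original branch sets, and the blocks are separated by buffer strips that we delete. Adjacent blocks are then connected through the strip-free corridors, and non-adjacent blocks are at distance greater than $r$, so no extra edges join them. This gives an induced minor model, with $K$ roughly $k\cdot r$.

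The main obstacle is step two: bounding the range of extra edges between branch sets in terms of $\Delta$. Minimality alone does not obviously prevent a long branch path from touching a far-away region. What we would try first is an iterative argument. Whenever a branch path has a chord to a distant branch set, we reroute it to obtain a new grid model in a suitably large sub-grid, and we show that the rerouting terminates because a potential (total size) decreases. If that fails, we would fall back on Korhonen's actual technique: find a large bounded-degree subcubic subdivided wall and induct on a clean separation structure.
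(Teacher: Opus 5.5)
The paper gives no proof of this statement; it only cites Korhonen. So your proposal has to stand on its own, and it has a genuine gap at Step two. Steps one and three are fine as reductions. If every edge of $G$ between branch sets of non-adjacent grid vertices joined grid vertices at distance at most $r$, your blocks with half-corridors at spacing larger than $r$ would indeed give an induced $(k\times k)$-grid model. But bounding $r$ in terms of $\Delta$ is the entire content of the theorem, and the justifications you sketch for it are false.

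Take the $(K\times K)$-grid on vertices $(i,j)$. In every column $j$, add all edges $(i,j)(i+2^{t(j)},j)$, where $2^{t(j)}$ is the largest power of $2$ dividing $j$; the maximum degree is at most $6$. Every model of the $(K\times K)$-grid in this graph consists of $K^2=|V(G)|$ singletons. So the identity model is minimal for total size and for any lexicographic potential. Yet every $s\times s$ sub-grid with $s\ge 4$ contains a column $j$ with $s/4<2^{t(j)}\le s/2$, and hence an extra edge of range more than $s/4$ inside it. This has four consequences for your plan:
\begin{itemize}
\item minimality does not bound the range;
\item no sub-grid of this minimal model of side at least $4r$ has all its extra edges of range at most $r$, whereas Step three needs side about $kr$;
\item a long chord does not yield a smaller model, so a termination argument based on decreasing total size has nothing to act on;
\item the claim that shortcuts are local ``because the degree is bounded'' confuses the number of extra edges at a vertex, which $\Delta$ controls, with their length, which it does not.
\end{itemize}

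A large induced grid minor does exist in this example. But it comes from choosing which vertices to use according to where the extra edges land, not from grid distance. Take vertical paths in odd columns, which carry no extra edges, together with horizontal paths along rows $3,6,\dots,3k$. These induce a subdivided $(k\times k)$-grid. The reason is that the extra neighbours of a vertex $(3a,j)$ with $j$ even sit in column $j$ at rows $3a\pm 2^{t(j)}$. These rows are not multiples of $3$, and column $j$ is not used by any vertical path.

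Your proposal contains no mechanism that does this against an arbitrary bounded-degree pattern of extra edges at all scales simultaneously. What is needed is an argument that uses the fact that each vertex put into the model has at most $\Delta$ neighbours, and builds or reroutes the model to avoid them. Your fallback (a subcubic subdivided wall plus an unspecified induction on separations) does not supply one. That argument is the heart of Korhonen's theorem, so as written the proposal establishes only the easy reductions around it.
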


In their paper, Dallard et al.~\cite{dallard2024treewidthversuscliquenumber4} posed a conjecture that, if true, would generalise \cref{thm:korhonen}.

\begin{conjecture}[\cite{dallard2024treewidthversuscliquenumber4}]\label{conj:inducedgridequiv}
    Let $d$ be a positive integer and let $\mathcal{G}$ be a hereditary $K_{1,d}$-free graph class.
    Then $\mathcal{G}$ has bounded tree-independence number if and only if there exists a positive integer $k$ such that $\mathcal{G}$ excludes the $(k\times k)$-grid as an induced minor.
\end{conjecture}

Chudnovsky et al.~\cite{arXiv:2512.23887} showed that for any $d$ and every planar graph $H$, the class $\mathcal{G}$ of $H$-induced-minor-free $K_{1,d}$-free graphs has polylogarithmic tree-independence number.
Additionally, it is known that the broader class of $H$-induced-minor-free graphs has a bounded tree-independence number for the following cases.

\begin{theorem}[\cite{dallard2024treewidthversuscliquenumber3}]
    Let $H$ be a graph and let $\mathcal{G}$ be the class of $H$-induced-minor-free graphs. Then for every graph $G\in \mathcal{G}$ the following holds:
    \begin{itemize}
    \item $\atw(G)\le 1$ if $H = C_4$ (i.e., $G$ is chordal),
    \item $\atw(G)\le 4$ if $H\in \{W_4, K_5^{-}\}$, and
    \item $\atw(G)\le 2q-2$ if $H = K_{2,q}$.
\end{itemize}
\end{theorem}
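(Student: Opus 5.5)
The three items share one strategy: bound the independence number of \emph{minimal separators}, then lift that bound to the bags of a suitable tree decomposition, namely the maximal cliques of a minimal triangulation, which are potential maximal cliques. Throughout I use that the class of $H$-induced-minor-free graphs is closed under vertex deletion and edge contraction. So any local obstruction we find can be contracted further until it becomes an induced minor isomorphic to $H$.

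\textbf{Item 1 ($H=C_4$).} If $G$ has an induced cycle of length at least $4$, contracting edges of that cycle yields an induced $C_4$. Hence $C_4$-induced-minor-free graphs are exactly the chordal graphs. A chordal graph has a clique tree, i.e.\ a tree decomposition whose bags are its maximal cliques. Every bag then has independence number $1$, so $\atw(G)\le 1$.

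\textbf{Item 3 ($H=K_{2,q}$).} I proceed in three steps.
\begin{enumerate}
\item \emph{Minimal separators.} Let $S$ be a minimal separator with two full components $C$ and $D$. If $G[S]$ contained an independent set $I$ of size $q$, I would delete $S\setminus I$ and everything outside $C\cup D\cup S$, then contract $C$ and $D$ to single vertices. Each of these two vertices is adjacent to all of $I$ and they are not adjacent to each other, so the result is an induced $K_{2,q}$. Hence $\alpha(G[S])\le q-1$ for every minimal separator $S$.
\item \emph{Choice of decomposition.} I take a minimal triangulation of $G$ and its clique tree. The bags are potential maximal cliques $\Omega$. It therefore suffices to show $\alpha(G[\Omega])\le 2q-2$ for every potential maximal clique $\Omega$.
\item \emph{Potential maximal cliques.} I use the Bouchitt\'e--Todinca characterisation: for every component $C$ of $G-\Omega$, the set $N(C)$ is a minimal separator; and every pair of nonadjacent vertices of $\Omega$ lies in $N(C)$ for some component $C$ of $G-\Omega$. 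Let $I$ be independent in $\Omega$. Fix one component $C$ and set $S=N(C)$. By step 1, $|I\cap S|\le q-1$. It remains to show $|I\setminus S|\le q-1$. The set $I\setminus S$ lies inside a single full component $D'$ of $G-S$, with $D'\neq C$. If $|I\setminus S|\ge q$, I would build two disjoint connected sets that are nonadjacent to each other and each adjacent to all of $q$ chosen vertices of $I\setminus S$. Contracting them gives an induced $K_{2,q}$. The candidates are: the components of $G-\Omega$ that witness pairs in $I\setminus S$ (they lie in $D'$), together with $C\cup S$ routed through $D'$.
\end{enumerate}
Adding the two parts gives $\alpha(G[\Omega])\le 2q-2$.

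\textbf{Item 2 ($H\in\{W_4,K_5^-\}$).} I would follow the same scheme with an explicit case analysis. First I would show that minimal separators have independence number at most $2$. An independent set of size $3$ in $S$, together with two full components, contracts to $K_{2,3}$; I would then check whether adding the structure that must be present inside $S$ forces $W_4$ or $K_5^-$. I have not verified this step, and it may need adjusting. Next I would bound $\alpha(G[\Omega])\le 4$ for potential maximal cliques by the same splitting $\Omega=(\Omega\cap S)\cup(\Omega\setminus S)$ used in item 3.

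I expect the main obstacle to be step 3 of item 3, namely bounding $|I\setminus S|$. The difficulty is constructing two \emph{disjoint, mutually nonadjacent} connected sets that each see all of $q$ vertices of $I\setminus S$. The witnessing components of $G-\Omega$ for different pairs may differ, so they cannot simply be merged. The first approach I would try is to pick a vertex $x\in I\setminus S$ and grow one branch set from the components of $G-\Omega$ adjacent to $x$, while the other branch set is $C\cup(S\setminus I)$, if needed extended inside $D'$. If this fails, I would fall back on the known general lemma that if every minimal separator has independence number at most $k$, then potential maximal cliques have independence number at most $2k$ in such classes.
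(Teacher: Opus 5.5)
The paper gives no proof of this statement; it is quoted from Dallard, Milani\v{c} and \v{S}torgel, so your proposal has to stand on its own. Item~1 is fine, and so are steps~1 and~2 of item~3. Step~3, however, carries all the difficulty of item~3, and it is not carried out. Worse, the split you propose is false: $|I\setminus N(C)|\le q-1$ fails for an arbitrary component $C$ of $G-\Omega$. Take $q=3$ and let $G$ be the wheel with hub $x$ and rim $123456$, plus a pendant vertex $p$ at $x$. First, $G$ is $K_{2,3}$-induced-minor-free:
the universal vertex $x$ lies in no branch set of a $K_{2,3}$-model, since every vertex of $K_{2,3}$ has a non-neighbour. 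Then $p$ (whose only neighbour is $x$) cannot be used either. The remaining $6$-cycle has no $K_{2,3}$ induced minor, because its induced minors have maximum degree at most $2$. Next, $\Omega=\{1,3,5,x\}$ is a maximal clique of the minimal triangulation that adds the chords $13,35,15$, so it is a potential maximal clique. For $C=\{p\}$ we get $S=\{x\}$, yet $I=\{1,3,5\}\subseteq\Omega\setminus S$ is independent of size $q$. So the two anticomplete branch sets you hope to build cannot exist. The theorem is not contradicted, since $\alpha(\Omega)=3\le 4$; it is the split that is wrong. At the very least $C$ would have to be chosen with care, and you give neither a rule nor an argument. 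Your fallback is circular. The converse of your step~1 also holds: given a $K_{2,q}$-model with poles $X,Y$, a minimal separator whose full components contain $X$ and $Y$ meets each of the $q$ pairwise anticomplete connectors. Hence `all minimal separators have $\alpha\le k$' is equivalent to `$K_{2,k+1}$-induced-minor-free', and the lemma you would invoke is exactly the bound on potential maximal cliques you were supposed to prove.

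Item~2 fails at its first step, and not merely for lack of verification. $K_{2,q}$ has no $K_4$ minor, whereas $W_4$ and $K_5^-$ both do, so $K_{2,q}$ is $W_4$- and $K_5^-$-induced-minor-free for every $q$. Yet its $q$-side $Z$ is a minimal separator with $\alpha(Z)=q$, since both poles are full components of $K_{2,q}-Z$. Moreover, for a pole $a$, the set $\{a\}\cup Z$ is a potential maximal clique with independence number $q$. So in these classes neither minimal separators nor potential maximal cliques have bounded independence number. No argument that bounds every bag of every minimal triangulation can work; one must construct a particular decomposition (for $K_{2,q}$ itself, add the edge $ab$ between the poles). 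The cited source handles this case structurally, using $\ell$-refined tree decompositions and SPQR trees, and obtains bags that are a clique plus at most three vertices.
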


Moreover, in the case when $\mathcal{G}$ is the class of $K_{1,d}$-free graphs, then it is known that $\mathcal{G}$ has bounded tree-independence number for the following more restricted cases.

\begin{theorem}
    Let $H$ be a graph and let $\mathcal{G}$ be the class of $K_{1,d}$-free $H$-induced-minor-free graphs. Then,
    \begin{itemize}
    \item if $H$ is the graph $kC_3$, then $\alpha\text{-}\mathrm{tw}(G)= \mathcal{O}(dk\log k)$ ~\cite{ahn2024coarseerdhosposatheorem,arXiv:2505.12866}; 
    \item if $H$ is a wheel $W_{\ell}$, then $\alpha\text{-}\mathrm{tw}(G) \le 8\ell(d-1) + 8d - 15$~\cite{choi2025excludinginducedwheelminor,choi2025excludingladderinducedminor};
    \item if $H$ is a $k$-ladder, then $\atw(G)\leq 64(d-1)\cdot (2d)^{2\cdot(2d)^{(2d)^{2dk^2+1}}}+8d-14$~\cite{choi2025excludingladderinducedminor}.
\end{itemize}
\end{theorem}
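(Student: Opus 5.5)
This theorem only collects results that are already in the literature, so the plan is to derive each item by citing the relevant paper and checking that its hypotheses cover our class. In each case the class of $K_{1,d}$-free $H$-induced-minor-free graphs is contained in, or coincides with, the class treated in the cited work. No new combinatorial argument should be needed; the only care required is matching definitions and tracking constants.

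For the first item, $H=kC_3$. A graph with no $kC_3$ induced minor has no $k$ pairwise far-apart (non-adjacent, vertex-disjoint) cycles in the induced-minor sense. I would invoke the coarse Erd\H{o}s--P\'osa theorem of Ahn et al.~\cite{ahn2024coarseerdhosposatheorem}. It gives a set $X$, namely the union of $\mathcal{O}(k\log k)$ balls of bounded radius, such that $G-N[X]$ (or $G-X$) is a forest, or chordal in the induced setting.

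In a $K_{1,d}$-free graph, a closed neighbourhood of a vertex has independence number less than $d$. A ball of bounded radius therefore has independence number $\mathcal{O}(d)$, after a short layered argument or after using radius-$1$ balls as in~\cite{arXiv:2505.12866}. Adding $N[X]$ to every bag of a tree decomposition of the remainder then raises $\alpha$ of each bag by only $\mathcal{O}(dk\log k)$. The precise packaging of this step is what~\cite{arXiv:2505.12866} provides, so I would cite it directly.

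For the second and third items, I would quote the main theorems of~\cite{choi2025excludinginducedwheelminor,choi2025excludingladderinducedminor}. Those papers prove exactly these bounds for $K_{1,d}$-free graphs excluding a wheel $W_\ell$ or a $k$-ladder as an induced minor. The stated constants, $8\ell(d-1)+8d-15$ and the tower-type ladder bound, are taken verbatim from their statements. The only check is that their definitions of wheel, ladder and induced minor agree with ours.

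The main obstacle, to the extent there is one, lies in the first item. There one has to make sure the balls in the coarse Erd\H{o}s--P\'osa theorem have radius bounded independently of $k$. Otherwise the $K_{1,d}$-freeness would yield only an $\alpha$-bound depending on that radius, and the bound would degrade from $\mathcal{O}(dk\log k)$.
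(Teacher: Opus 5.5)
Your plan matches the paper, which gives no proof of this statement and simply collects the three bounds from the cited works. One correction to your sketch of the first item: the theorem of Ahn et al.\ gives a set $X$ of $\mathcal{O}(k\log k)$ vertices with $G-N[X]$ a forest, and it is exactly this radius-$1$ form that gives $\alpha(N[X])\le (d-1)|X|$ and hence $\atw(G)\le 1+(d-1)|X|=\mathcal{O}(dk\log k)$. Your claim that balls of bounded radius have independence number $\mathcal{O}(d)$ is false: in a $K_{1,d}$-free graph a ball of fixed radius $r\ge 2$ can have independence number of order $d^{r}$ (e.g.\ a complete $(d-2)$-ary tree), so radius bounded independently of $k$ is not enough.
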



The paper is organised as follows.
In \cref{sec:prelim} we present some important definitions and notation. Then, in \cref{sec:without_long_cycle} we investigate several $K_{1,d}$-free graph classes, namely the classes of tolerance, co-comparability, and AT-free graphs, as well as a particular case of bisplit graphs, and show that they have bounded tree-independence number with a linear upper bound in terms of $d$. In \cref{sec:outerstring} we then prove that classes of $K_{1,d}$-free outerstring and, more generally, $k$-outerstring graphs have bounded tree-independence number by showing some explicit upper bounds. We continue in \cref{sec:unbounded} by showing that classes of $K_{1,d}$-free circle containment graphs and rook graphs have unbounded tree-independence number. Several results from Sections~\ref{sec:without_long_cycle}, \ref{sec:outerstring}, and~\ref{sec:unbounded} are captured in \cref{fig:placeholder}.
In \cref{sec:K2d} we then turn our attention to $K_{2,d}$-free graphs and generalise some results from \cref{sec:without_long_cycle} by showing that when excluding all cycles of length at least $5$ such graph classes have bounded tree-independence number, again with a linear upper bound in terms of $d$. Finally, in \cref{sec:conclusion} we present a couple of open problems.



\begin{figure}
    \centering
    \includegraphics[width=0.667\linewidth]{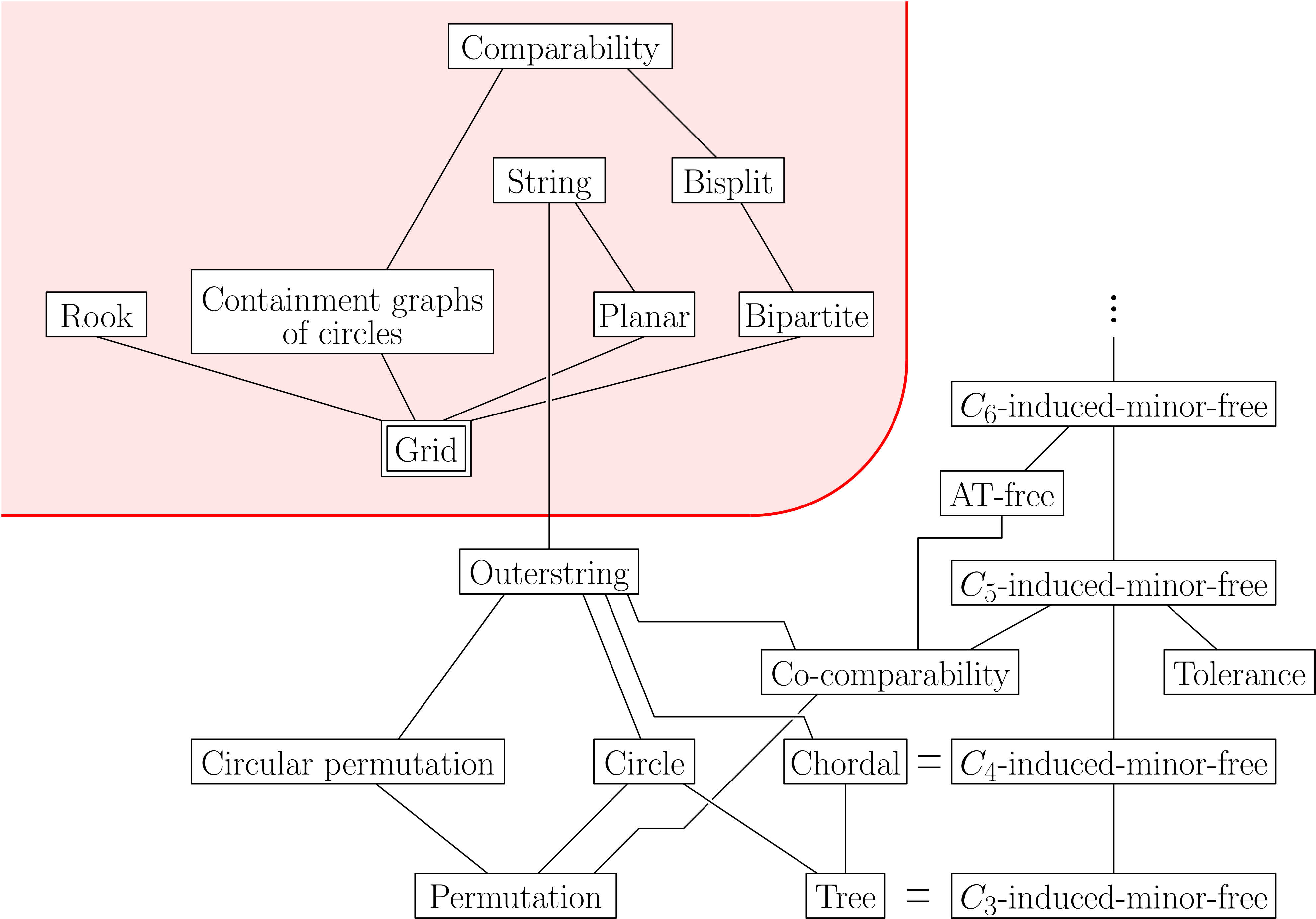}
    \caption{Containment relations of graph classes in our paper. Graph classes contained in the red part have unbounded tree-independence number even when $K_{1,d}$ is forbidden as an induced subgraph while the others have bounded tree-independence number.
    }
    \label{fig:placeholder}
\end{figure}

\section{Preliminaries}\label{sec:prelim}

All graphs considered in this paper are finite, simple (i.e., without loops or parallel edges), and undirected. We write $V(G)$ and $E(G)$ for the vertex set and edge set of a graph $G$, respectively. 


For $v\in V(G)$, the set $N_G(v) = \{ u\in V(G) : vu\in E(G)\}$ is the \emph{neighbourhood} of $v$,  and $N_G[v] = N(v) \cup \{v\}$ is the \emph{closed neighbourhood} of $v$.
Similarly, for $S\subseteq V(G)$, we write $N_G[S]=\bigcup_{v\in S}N_G[v]$ and $N_G(S) = N_G[S]\setminus S$.
The \emph{degree} of a vertex $v\in V(G)$, denoted by $d_G(v)$, is the number of edges incident with $v$, or equivalently, $d_G(v)=|N(v)|$.
If the graph is clear from the context, we may just write $N(v)$, $N[v]$, $N(S)$, $N[S]$ and $d(v)$ instead of $N_G(v)$, $N_G[v]$, $N_G(S)$, $N_G[S]$ and $d_G(v)$, respectively.

An \emph{independent set} in a graph $G$ is a set of pairwise nonadjacent vertices.
The \emph{independence number} of a graph $G$, denoted by $\alpha(G)$, is the cardinality of a largest independent set in $G$.
A \emph{clique} in a graph $G$ is a set of vertices such that every two distinct vertices are adjacent.
The \emph{clique number} of a graph $G$, denoted by $\omega(G)$, is the size of a largest clique in $G$.

A graph $H$ is an \emph{induced subgraph} of a graph $G$ if a graph isomorphic to $H$ can be obtained from $G$ by a sequence of vertex deletions.
Given a set $S\subseteq V(G)$, we denote by $G[S]$ the subgraph of $G$ \emph{induced} by $S$.
For every set $S\subseteq V(G)$, we denote by $G-S$ the graph $G[V(G)\setminus S]$, and similarly, for every $u\in V(G)$, $G-u$ corresponds to the graph $G[V(G)\setminus \{u\}]$.
Given a graph $G$ and an edge $uv$ in $G$, \emph{contracting} the edge $uv$ means replacing $u$ and $v$ with a single new vertex $w$ so that the vertices adjacent to $w$ are exactly the vertices of $G$ that are adjacent to $u$ or $v$.
A graph $H$ is an \emph{induced minor} of a graph $G$ if a graph isomorphic to $H$ can be obtained from $G$ by a sequence of vertex deletions and edge contractions.
If a graph $G$ does not contain a graph $H$ as an induced minor, then we say that $G$ is \emph{$H$-induced-minor-free}.

A \emph{complete graph} of order $n$ is denoted by $K_n$.
Given two integers $m,n\ge 0$, the \emph{complete bipartite graph} $K_{m,n}$ is a bipartite graph with parts of size $m$ and $n$, such that two vertices are adjacent if and only if they belong to different parts.
The path graph and the cycle graph with $n$ vertices are denoted by $P_n$ and $C_n$, respectively.
Given a graph $G$ and an integer $k\ge 0$, a \emph{path of length $k$ in} $G$ is a sequence $P = v_1\ldots v_{k+1}$ of pairwise distinct vertices of $G$ such that $v_iv_{i+1}\in E(G)$ for all $i = 1,\ldots, k$.
A path $v_1\ldots v_k$ with $k\ge 3$ and $v_kv_1\in E(G)$ is a \emph{cycle of length $k$ in} $G$.
An edge $v_iv_j\in E(G)$ with $|i-j|>1$ is a \emph{chord} of the cycle; the cycle is said to be \emph{chordless} if it has no chords.
Given two disjoint subsets of vertices $A$ and $B$, a set $S$ is a \emph{separator} between $A$ and $B$ in a graph $G$ if there is no path between a vertex from $A$ and a vertex from $B$ in the graph $G - S$.
We remark that possibly $S\cap A\neq \emptyset$ or $S\cap B\neq \emptyset$.

A \emph{tree decomposition} of a graph $G$ is a pair $\mathcal{T}=(T, \{ X_t \}_{t\in V(T)})$ where $T$ is a tree and every node $t$ of $T$ is assigned a vertex subset $X_t\subseteq V(G)$ called a \emph{bag} such that the following conditions are satisfied: every vertex~$v$ is in at least one bag, for every edge $uv\in E(G)$ there exists a node $t\in V(T)$ such that $X_t$ contains both $u$ and $v$, and for every vertex $u\in V(G)$ the subgraph $T_u$ of $T$ is induced by the set $\{ t\in V(T) : u\in X_t \}$ is connected, (that is, a tree).
The \emph{width} of $\mathcal{T}$, denoted by $width(\mathcal{T})$, is the maximum value of $|X_t|-1$ over all $t\in V(T)$.
The \emph{treewidth} of a graph $G$, denoted by $\tw(G)$, is defined as the minimum width of a tree decomposition of $G$.



The \emph{independence number} of $\mathcal{T}$ is defined as
\[\alpha(\mathcal{T}) = \max_{t\in V(T)} \alpha(G[X_t])\,.\]
The \emph{tree-independence number} of $G$, denoted by $\atw (G)$, is the minimum independence number among all possible tree decompositions of $G$.

\section{Graph classes without long induced cycles}\label{sec:without_long_cycle}

In this section we consider the question of establishing an upper bound on the tree-independence number for specific $K_{1,d}$-free graph classes excluding long induced cycles. One particular result in this direction is the following result by Dallard et al.~\cite{dallard2024treewidthversuscliquenumber4}.

\begin{theorem}[\cite{dallard2024treewidthversuscliquenumber4}]
    \label{thm:PkK1d}
    Let $k\ge 3$ and $d\ge 2$ be integers and let $G$ be a $\{P_k, K_{1,d}\}$-free graph. Then $\alpha\text{-}\mathrm{tw}(G)\le (d-1)(k-2)$.
\end{theorem}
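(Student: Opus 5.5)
Since a connected graph that is not $P_k$-free of course contains long induced paths, the natural tool here is a BFS layering, and the plan is to show that a tree decomposition of independence number at most $(d-1)(k-2)$ exists. We may assume $G$ is connected, since tree decompositions of components can be joined into one without increasing the independence number of any bag. Fix a vertex $r$ and let $L_0=\{r\},L_1,\dots,L_m$ be the distance layers from $r$. Because a shortest path from $r$ to a vertex of $L_m$ is an induced path on $m+1$ vertices, $P_k$-freeness gives $m\le k-2$, so there are at most $k-1$ layers.

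The most direct approach would be to take a single bag $V(G)$ and bound $\alpha(G)$, but this fails: only $\alpha(G[L_i])$ for $i\ge 1$ is controlled locally. Each vertex of $L_i$ ($i\ge1$) has a neighbour in $L_{i-1}$, and $K_{1,d}$-freeness says every vertex has at most $d-1$ pairwise nonadjacent neighbours, so the independence number of a layer is bounded only in terms of the size of a dominating set in the previous layer. So instead we will use induction on $k$ combined with a separation argument, aiming for the recurrence $\atw(G)\le (d-1)+\atw_{k-1}$ with base case $k=3$. For $k=3$, a $P_3$-free graph is a disjoint union of cliques, so $\atw\le 1\le d-1$.

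For the inductive step, take $v=r$ and consider $N(v)$, which satisfies $\alpha(G[N(v)])\le d-1$. For each component $C$ of $G-N[v]$, we want to argue that $G[C]$ is $P_{k-1}$-free. The reason is that if $C$ contains an induced path $P=p_1\dots p_{k-1}$, then extending it with a neighbour $u\in N(v)$ of an endpoint (or via $u$ and $v$) yields a longer induced path, but only after care with chords from $u$ into $P$. A more robust form is to choose $v$ and the component cleverly; the standard Gyárfás-path argument does exactly this. Start from $v_1$ and take $X_1=N[v_1]$ together with a big component $C_1$ of $G-N[v_1]$. Pick $v_2\in N(v_1)$ adjacent to $C_1$, let $C_2$ be the component of $C_1-N(v_2)$, and continue. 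This grows an induced path $v_1v_2\dots$ whose length is at most $k-1$.

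Concretely, the decomposition is built as follows. Given the current "anchor" set $S=N[v_1]\cup\dots\cup N[v_j]$ restricted appropriately, each component $C$ of $G-S$ has its attachment $N(C)$ contained in $N(v_j)$ (by construction of the Gyárfás path), and the components adjacent to different parts are handled recursively. The bag at each step is $N(C)\cup\{\text{next } v\}\cup(N(v)\cap \text{stuff})$. The alternative, cleaner option is a path-like decomposition along the Gyárfás sequence with bags $B_i=N[v_i]\cap(\text{region } i)$. Each bag is a union of at most $k-2$ closed neighbourhoods. Since $\alpha(G[N[v]])\le d-1$ by $K_{1,d}$-freeness, with $v$ itself dominated by its neighbour, we get $\alpha\le (d-1)(k-2)$. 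The bags are glued via the standard fact that if $S$ separates $G$ into parts $G_1,G_2$, then decompositions of $G_i$ with $S$ added to every bag combine.

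The main obstacle is making this bookkeeping rigorous. Specifically, we must show that the union of neighbourhoods accumulated along one branch never involves more than $k-2$ vertices $v_i$, since the $v_i$ form an induced path in which each new $v_i$ is anticomplete to the earlier region. We must also ensure the tree-decomposition axioms hold when components branch. I would first try to prove the statement in the form "for every connected $G$ and every vertex set $Z=N(v)$ with $G-Z$ attached only via $Z$, there is a decomposition in which $Z$ lies in a root bag". That stronger hypothesis is what makes the induction on path length go through.
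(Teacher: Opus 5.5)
The paper only cites this theorem and gives no proof, so I am judging your proposal on its own merits. Your Gyárfás-path instinct is the right one, but what you have written is not yet a proof. The bags, the recursion and the invariant controlling the boundaries of the pieces are never pinned down (``restricted appropriately'', ``stuff'', ``region $i$''), and the two concrete structural claims you do make are false.

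First, the recurrence $\atw(G)\le (d-1)+\atw_{k-1}$ via $G-N[v]$ is not available even for a cleverly chosen $v$. Let $A$ be the disjoint union of two copies of $P_3$, and let $G$ be the join of two copies of $A$. Then $G$ is $\{P_4,K_{1,5}\}$-free, yet for every vertex $v$ some component of $G-N[v]$ is isomorphic to $P_3$.

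Second, it is not true that every remaining component $C$ has $N(C)\subseteq N(v_j)$. The Gyárfás construction controls only the single component you follow. The other components of $C_{j-1}\setminus N(v_j)$ may attach only to the earlier neighbourhoods $N(v_1),\dots,N(v_{j-1})$. This is exactly where the tree must branch and where the path must be cut back, and your sketch has no rule for it. Likewise, ``at most $k-2$ neighbourhoods per bag'' is asserted, not derived; an induced path in a $P_k$-free graph can have $k-1$ vertices.

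The strengthening you propose at the end has the right shape (boundary in the root bag), but a single $Z=N(v)$ is not preserved by the recursion. What is preserved is a layered path, as follows.

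Maintain an induced path $q_1\cdots q_m$ and sets $L_l\subseteq N(q_l)$ whose vertices are distinct from and non-adjacent to $q_1,\dots,q_{l-1}$. Also maintain a connected set $R$, disjoint from and anticomplete to $\{q_1,\dots,q_m\}$, with $N(R)\subseteq L_1\cup\dots\cup L_m$. The goal is a decomposition of $G[R\cup N(R)]$ whose root bag contains $N(R)$.

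Let $i$ be the \emph{largest} index with $N(R)\cap L_i\neq\emptyset$. Pick $x\in N(R)\cap L_i$ and a neighbour $r\in R$ of $x$. Then $q_1\cdots q_i x r$ is an induced path, so $i\le k-3$. Use the bag $B=N(R)\cup(N(x)\cap R)\subseteq N(q_1)\cup\dots\cup N(q_i)\cup N(x)$, which gives $\alpha(G[B])\le (d-1)(i+1)\le (d-1)(k-2)$.

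Recurse on every component $R'$ of $G[R\setminus N(x)]$ with the path $q_1\cdots q_i x$ and layers $L_1,\dots,L_i,N(x)\cap R$. The invariant holds because $N(R')\subseteq N(R)\cup(N(x)\cap R)$, together with the maximality of $i$. Attaching the children's root bags to $B$ yields a tree decomposition, since $N(R')\subseteq B$ and the sets $R'$ are pairwise disjoint and disjoint from $B$.

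Initialise with the bag $N[q_1]$ for an arbitrary $q_1$; it has independence number at most $d-1$. For each component $R$ of $G-N[q_1]$, start with the path $q_1$ and $L_1=N(q_1)$.

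So your sketch is missing two ideas. Each layer must be restricted to the region current when it was created, which keeps the truncated path induced. And the path must be cut back to the deepest layer that $R$ actually touches, which handles the branching and yields the $k-2$ count.
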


Moreover, one can easily obtain the following slight generalisation of \cref{thm:PkK1d} (see also \cite{dallard2024treewidthversuscliquenumber4,choi2025excludinginducedwheelminor}).

\begin{theorem}
    \label{thm:atw_K1d-free_longest_cycle}
    Let $\ell\ge 3$ and $d\ge 2$ be integers and let $G$ be a $K_{1,d}$-free graph with no hole of length more than $\ell$. Then $\alpha\text{-}\mathrm{tw}(G)\le (d-1)(\ell - 2)$.
\end{theorem}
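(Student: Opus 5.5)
The plan is to build a tree decomposition whose every bag is dominated by few vertices, and then use $K_{1,d}$-freeness to bound the independence number of each bag. The key local fact is that for every vertex $v$ of a $K_{1,d}$-free graph we have $\alpha(G[N(v)])\le d-1$. Consequently, if a bag $X$ satisfies $X\subseteq N[D]$ for some set $D$ with $|D|\le \ell-2$, then
\[
\alpha(G[X])\le (d-1)(\ell-2).
\]
This follows by charging each vertex of an independent set $I\subseteq X$ to a vertex of $D$ that dominates it. A vertex of $I\cap D$ is charged to itself and then blocks all of its neighbours, so its contribution is at most $1\le d-1$. A vertex $v\in D\setminus I$ receives only vertices of $I\cap N(v)$, and there are at most $d-1$ of these. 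It therefore suffices to prove the following structural claim: a graph with no hole of length more than $\ell$ admits a tree decomposition in which every bag is contained in $N[D]$ for some $D$ with $|D|\le \ell-2$ (taking $D$ to be an induced path will be most convenient).

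To prove the structural claim, I would follow the approach used for \cref{thm:PkK1d} in \cite{dallard2024treewidthversuscliquenumber4}, namely minimal separators and minimal triangulations. First I may assume that $G$ is connected. Next I take a minimal triangulation $H$ of $G$. Its maximal cliques are potential maximal cliques $\Omega$ of $G$, and they form the bags of a tree decomposition of $G$. For such an $\Omega$, the standard characterisation (Bouchitt\'e--Todinca) says two things. First, the sets $S_i=N(C_i)$ over the components $C_i$ of $G-\Omega$ are minimal separators contained in $\Omega$. Second, any two nonadjacent vertices $x,y\in\Omega$ are joined by a path whose interior lies in a single component $C_i$ with $x,y\in S_i$.

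The main step is then a domination lemma, which I would prove by induction along a shortest path argument inside $\Omega\cup\bigcup_i C_i$. Fix a vertex $u\in\Omega$ and let $x\in\Omega$ be a vertex not in $N[u]$. Combining a shortest $u$--$x$ path through the appropriate full component with the shortest path through another component, I obtain a hole. Because holes have length at most $\ell$, the distance from $u$ to $x$ is bounded. I would then refine this to show that all of $\Omega$ is dominated by an induced path with at most $\ell-2$ vertices. The argument runs as follows. Suppose $x\in\Omega$ is undominated by the current path $P$. Then the path can be extended towards $x$. If extending it would require more than $\ell-2$ vertices, I would close up $P$ and the connecting path through a component to form a hole longer than $\ell$, contradicting the hypothesis.

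I expect the main obstacle to be obtaining exactly $\ell-2$ rather than $\ell-1$ dominating vertices. A naive argument in the spirit of Kosowski--Li--Nisse--Suchan, which uses bags dominated by a path of length at most $\ell-1$, gives only $(d-1)(\ell-1)$. To save one vertex, I would exploit the fact that a hole has length at least $3$ more than the number of its interior path vertices used. Concretely, the two endpoints of the dominating path must be nonadjacent vertices of $\Omega$, and they close into a hole through a component together with at least one further vertex of that component. Hence a dominating induced path with $m$ vertices forces a hole of length at least $m+2$. If this counting fails, a fallback is to handle the case where $\Omega$ is a clique separately (its bag has $\alpha=1$), and otherwise to choose $D$ consisting of the interior vertices of a shortest path between a suitable nonadjacent pair in $\Omega$, together with an endpoint argument.
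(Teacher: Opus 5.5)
\textbf{There is a genuine gap: the structural claim that carries the theorem is asserted, not proved.} Your charging step is correct: a bag contained in $N[D]$ with $|D|\le \ell-2$ has independence number at most $(d-1)(\ell-2)$ in a $K_{1,d}$-free graph. But that is the easy half. The statement that bags can be taken dominated by at most $\ell-2$ vertices is the whole content of the theorem, and each step of your sketch of it breaks down.

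\emph{First step.} A potential maximal clique $\Omega$ has no full component. For nonadjacent $u,x\in\Omega$ there need not be a second component of $G-\Omega$ seeing both; the second $u$--$x$ path must go through the other full component of the minimal separator $N(C)$. After this repair you only learn that vertices of $\Omega$ are at bounded distance from one another. That says nothing about $\alpha(\Omega)$ in a $K_{1,d}$-free graph: take a vertex adjacent to a large clique, each of whose vertices has a private pendant neighbour. This graph is $K_{1,3}$-free, yet it has an arbitrarily large independent set within distance $2$ of one vertex.

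\emph{Extension step.} There is no mechanism here. A path grows only at its two ends, while undominated vertices of $\Omega$ may lie in different directions. Adding a vertex can create chords to the interior of $P$. And $P$ together with a path through a component is a hole only if that component is anticomplete to the interior of $P$ and the attachment vertices at the ends are private. Nothing in your construction arranges this.

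\emph{Counting step.} It is off by one. An induced path on $m$ vertices, closed through a component that contributes at least one vertex, gives a hole on at least $m+1$ vertices. So you only get $m\le \ell-1$, i.e.\ the Kosowski--Li--Nisse--Suchan bound and $(d-1)(\ell-1)$. The claim that the ends of the dominating path are nonadjacent vertices of $\Omega$ is also unjustified.

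\textbf{What the paper does and how to close the gap.} The paper does not reprove the statement; it only remarks that it follows from the argument for \cref{thm:PkK1d} in \cite{dallard2024treewidthversuscliquenumber4} (see also \cite{choi2025excludinginducedwheelminor}). For a self-contained proof, the missing idea is an exchange step in a Kosowski-type recursive construction, rather than working with the maximal cliques of an arbitrary minimal triangulation.
\begin{itemize}
\item Maintain an induced path $P=v_1\cdots v_j$ and a component $R$ of $G-N[P]$. First discard any end of $P$ with no private neighbour in $N(R)$; $R$ stays a component.
\item Let $a$ and $b$ be private neighbours in $N(R)$ of $v_1$ and $v_j$ respectively. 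If $ab\notin E(G)$, closing $a\,v_1\cdots v_j\,b$ through $R$ gives a hole of length at least $j+3$. If $ab\in E(G)$ and $j\ge 2$, then $a\,v_1\cdots v_j\,b$ is itself a hole of length $j+2$. Hence $j\le \ell-2$, and if $j=\ell-2$ then every such $a$ lies in $N[b]$ for every such $b$.
\item Pick a private neighbour $u$ of $v_j$ in $N(R)$. Make $N(R)\cup(N(u)\cap R)$ a bag, and recurse on the components of $R-N(u)$ with the path $v_1\cdots v_j u$.
\item If $j\le\ell-3$, this bag is dominated by $V(P)\cup\{u\}$. If $j=\ell-2$, it is dominated by $\{v_2,\dots,v_j,u\}$, because every private neighbour of $v_1$ in $N(R)$ lies in $N[u]$. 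For the same reason, $v_1$ has no private neighbour in any child's separator and is discarded.
\end{itemize}
This exchange of $v_1$ for $u$ is exactly what gains the vertex your counting could not.
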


Note that in the above theorem, if $\ell = 3$, then $G$ is a chordal graph and the tree-independence number of chordal graphs is equal to $1$. 

On another note, we already mentioned that \cref{thm:hajebi-spirkl} yields boundedness of the class of $\{P_k, K_{d,d}\}$-free graphs. Although not much is known in terms of the upper in the general case, a recent result by Blažej et al.\cite{blazej2026treeindependencenumberp5freegraphs} shows that $\{P_5, K_{d,d}\}$-free graphs have tree-independence number at most $4d$.

We now explore the tree-independence number of certain graph classes without long induced cycles. 
First, we present several examples of $K_{1,d}$-free graph classes for which the tree-independence number is bounded as an immediate consequence of Theorem~\ref{thm:atw_K1d-free_longest_cycle}.

\subsection{Tolerance, co-comparability, and AT-free graphs}

An \emph{interval graph} is a graph where each vertex $v$ can be assigned a closed interval $I_v$ on the real line, such that $x$ and $y$ are adjacent if and only if $|I_x \cap I_y | \neq \emptyset$.
Moreover, if every vertex $v$ of an interval graph can be assigned a positive real value $t_v$ (called a \emph{tolerance}) such that $x$ and $y$ are adjacent if and only if $|I_x \cap I_y | \ge \min \{t_x , t_y \}$, then the graph is said to be a \emph{tolerance graph}.

\begin{corollary}\label{no_long_hole_tolerance}
     Let $G$ be a $K_{1,d}$-free tolerance graph with $d\geq 2$ an integer. Then $\alpha\text{-}\mathsf{tw}(G)\leq 2d-2$.
\end{corollary}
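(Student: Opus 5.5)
The plan is to apply \cref{thm:atw_K1d-free_longest_cycle} with $\ell = 4$, which gives exactly $\atw(G)\le (d-1)(4-2) = 2d-2$. It therefore suffices to show that tolerance graphs have no hole of length at least $5$, i.e., that every tolerance graph is weakly chordal (or at least that its induced cycles have length at most $4$).

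\textbf{Step 1.} Recall or prove that tolerance graphs contain no $C_k$ with $k\ge 5$ as an induced subgraph. The cleanest route is the known result of Golumbic, Monma and Trotter that tolerance graphs are weakly chordal. If a self-contained argument is needed, I would argue directly from the representation. Suppose $v_1,\dots,v_k$ ($k\ge5$) induce a hole with intervals $I_i$ and tolerances $t_i$. Choose the vertex $v_i$ whose tolerance is minimal among those whose interval is ``bounded'' (that is, $t_i\le |I_i|$); vertices with $t_i>|I_i|$ behave like isolated points with respect to each other. The standard argument splits into two cases. If all tolerances are bounded, then the graph is a bounded tolerance graph and hence a co-comparability graph, and co-comparability graphs have no holes of length at least $5$ (the complement of $C_k$ for $k\ge5$ is not a comparability graph). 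Otherwise, consider a vertex with unbounded tolerance: it can be adjacent only to vertices whose tolerance is at most the overlap, which I would use to replace it by a suitable bounded interval (for instance, shrinking its tolerance to $|I_v|$ or adjusting it) without changing adjacencies within the cycle. This reduces to the bounded case.

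\textbf{Step 2.} Conclude. Since $G$ is $K_{1,d}$-free with no hole longer than $4$, \cref{thm:atw_K1d-free_longest_cycle} with $\ell=4$ yields $\atw(G)\le 2(d-1)=2d-2$.

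The main obstacle is Step 1 in the unbounded case if one insists on a self-contained proof. The reduction of an unbounded vertex to a bounded one must be checked so that it preserves both adjacencies and non-adjacencies along the hole. I would first try to simply cite weak chordality of tolerance graphs, falling back on the case analysis above only if the citation is unavailable.
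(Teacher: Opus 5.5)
Your proposal is correct and matches the paper's proof. The paper likewise cites Golumbic, Monma and Trotter for the fact that tolerance graphs have no holes of length at least $5$, and then applies \cref{thm:atw_K1d-free_longest_cycle} with $\ell=4$. Your self-contained fallback is not needed; note also that as sketched it would not work verbatim, since lowering an unbounded tolerance $t_v$ to $|I_v|$ creates a new edge to any $u$ with $I_v\subseteq I_u$ and $t_u>|I_v|$.
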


\begin{proof}
     As shown by Golumbic, Monma, and Trotter~\cite[Theorem 2]{GOLUMBIC1984157}, tolerance graphs do not contain holes of length at least $5$. Thus, by \cref{thm:atw_K1d-free_longest_cycle}, $\alpha\text{-}\mathsf{tw}(G)\leq 2d-2$.
\end{proof}

A graph is \emph{comparability} if its edges can be oriented such that if $(a,b)$ and $(b,c)$ are directed edges, then there is a directed edge $(a,c)$.
A graph $G$ is \emph{co-comparability} if $\overline{G}$ is comparability.

\begin{corollary}\label{no_long_hole_co-comparability}
    Let $G$ be a $K_{1,d}$-free co-comparability graph with $d\geq 2$ an integer. Then $\alpha\text{-}\mathsf{tw}(G)\leq 2d-2$.
\end{corollary}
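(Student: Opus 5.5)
The plan mirrors the proof of \cref{no_long_hole_tolerance}. I first show that a co-comparability graph contains no hole of length at least $5$, and then apply \cref{thm:atw_K1d-free_longest_cycle} with $\ell=4$, which gives $\alpha\text{-}\mathsf{tw}(G)\le (d-1)(4-2)=2d-2$.

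For the structural step, suppose $G$ is co-comparability and contains an induced cycle $C_k$ with $k\ge 5$. Since being a comparability graph is hereditary (restrict a transitive orientation), the complement of this induced cycle, $\overline{C_k}$, would be a comparability graph. So it suffices to show that $\overline{C_k}$ has no transitive orientation for $k\ge 5$. For $k=5$ this is immediate: $\overline{C_5}\cong C_5$, and an odd cycle is not a comparability graph. In a transitive orientation every vertex must be a source or a sink along an induced path with three vertices, which forces the orientations to alternate around the cycle; an odd cycle cannot alternate.

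For $k\ge 6$, I would use the standard forcing argument. Label the cycle $v_0,\dots,v_{k-1}$. In $\overline{C_k}$, the edges $v_iv_j$ and $v_jv_{i+1}$ with $j\notin\{i-1,i,i+1,i+2\}$ form an induced path with three vertices, because $v_iv_{i+1}$ is a non-edge of the complement. Transitivity therefore forces both edges to point into $v_j$ or both out of $v_j$. Following these forcing relations around the cycle gives a contradiction, namely an edge forced in both directions. Alternatively, I would simply cite the classical fact that co-comparability graphs are AT-free, together with the observation that for $k\ge 6$ three pairwise far-apart vertices of $C_k$ form an asteroidal triple, so AT-free graphs have no holes of length at least $6$. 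The case $k=5$ is handled as above.

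I expect the main obstacle to be making the forcing argument for general $k\ge 6$ clean. Citing the known result that co-comparability graphs are weakly chordal-free of long holes (Golumbic's book), or the AT-free containment, avoids this. I would cite it, mirroring how \cref{no_long_hole_tolerance} cites Golumbic, Monma, and Trotter.
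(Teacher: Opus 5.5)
Your proposal is correct and takes the same route as the paper. The paper also shows that co-comparability graphs have no holes of length at least $5$, citing Gallai's result that comparability graphs contain no complements of such holes, and then applies \cref{thm:atw_K1d-free_longest_cycle} with $\ell=4$. Your forcing sketch for $k\ge 6$ closes in one line: the forcing makes every vertex of $\overline{C_k}$ a source or a sink, which is impossible on the triangle $v_0v_2v_4$. Drop the phrase ``weakly chordal'', though, since co-comparability graphs can contain long antiholes such as $\overline{C_6}$.
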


\begin{proof}
    As shown by Gallai~\cite{GallaiTransitive}, comparability graphs contain no complements of holes of length at least $5$.
    Hence co-comparability graphs do not contain holes of length at least $5$. Thus, by \cref{thm:atw_K1d-free_longest_cycle}, $\alpha\text{-}\mathsf{tw}(G)\leq 2d-2$.
\end{proof}

A triple $x,y,z$ of vertices in a graph $G$ is called an \emph{asteroidal triple} if between each pair of them there exists a path in $G$ that avoids the closed neighbourhood of the third one.
A graph is \emph{AT-free} if it does not contain an asteroidal triple.

\begin{corollary}\label{no_long_hole_AT-free}
    Let $G$ be a $K_{1,d}$-free AT-free graph with $d\geq 2$ an integer. Then $\alpha\text{-}\mathsf{tw}(G)\leq 3d-3$.
\end{corollary}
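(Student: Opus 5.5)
The plan mirrors the two preceding corollaries: bound the length of holes in AT-free graphs and then invoke \cref{thm:atw_K1d-free_longest_cycle}. Since the target bound is $3d-3=(d-1)(5-2)$, it suffices to show that an AT-free graph contains no hole of length more than $5$, i.e.\ no hole of length at least $6$. Taking $\ell=5$ in \cref{thm:atw_K1d-free_longest_cycle}, we then obtain $\alpha\text{-}\mathsf{tw}(G)\le (d-1)(5-2)=3d-3$ for every $K_{1,d}$-free AT-free graph $G$.

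The key step is the (standard) observation that any hole $C=v_1v_2\cdots v_k$ with $k\ge 6$ contains an asteroidal triple, namely $x=v_1$, $y=v_3$, $z=v_5$. To check this, use the arcs of $C$ itself. Since $C$ is chordless, the closed neighbourhood of $z=v_5$ meets $C$ only in $\{v_4,v_5,v_6\}$. The arc $v_1v_2v_3$ avoids this set, so it is a path from $x$ to $y$ avoiding $N[z]$.

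The other two pairs are handled the same way:
\begin{itemize}
\item $N[x]\cap V(C)=\{v_k,v_1,v_2\}$, and the arc $v_3v_4v_5$ avoids it.
\item $N[y]\cap V(C)=\{v_2,v_3,v_4\}$, and the arc $v_5v_6\cdots v_kv_1$ avoids it. This arc exists and misses $v_4$ and $v_2$ precisely because $k\ge 6$, so that $v_6\neq v_1$ and the arc does not wrap back into $N[y]$.
\end{itemize}

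There is one subtlety: the paths must avoid the closed neighbourhoods in all of $G$, not only in $C$. This holds automatically, because a path all of whose vertices lie on $C$ meets $N_G[w]$ only in vertices of $C$ adjacent to $w$. Since $C$ is an induced cycle, those are exactly the cycle-neighbours of $w$. So the argument is entirely local to $C$.

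I do not expect a genuine obstacle. The only point needing care is the index bookkeeping, to confirm that $k\ge 6$ (and not $k\ge 5$) is what is needed. Indeed, $C_5$ is AT-free, since any three vertices of $C_5$ include two adjacent ones or a vertex dominating a connecting path. This explains why the bound is $3d-3$ rather than $2d-2$ as for tolerance and co-comparability graphs.
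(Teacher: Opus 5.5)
Your proposal is correct and follows the paper's proof: AT-free graphs have no hole of length at least $6$, because three pairwise non-adjacent vertices of such a hole form an asteroidal triple, and the theorem with $\ell=5$ then gives $(d-1)(5-2)=3d-3$. One small bookkeeping slip is that $k\ge 6$ is really used in your first two bullets, where you need $v_6\neq v_1$ and $v_k\neq v_5$, i.e.\ $v_1v_5\notin E(G)$; the arc $v_5\cdots v_kv_1$ in the third bullet avoids $N[v_3]$ even when $k=5$.
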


\begin{proof}
    AT-free graphs do not contain holes of length at least $6$, since three mutually non-adjacent vertices in holes of length at least $6$ form an asteroidal triple.
    Thus, by \cref{thm:atw_K1d-free_longest_cycle}, $\alpha\text{-}\mathsf{tw}(G)\leq 3d-3$.
\end{proof}

\subsection{Bisplit graphs}

A graph is \emph{bisplit} if there exists a partition $(X,Y,Z)$ of its vertices such that $X$, $Y$, and $Z$ are independent sets and $Y\cup Z$ induces a complete bipartite graph. We call such a partition a \emph{bisplit partition}. Since walls are bipartite and the class of bisplit graphs contains all bipartite graphs, it follows that the class of bisplit graphs contains $K_{1,4}$-free graphs of arbitrarily large tree-independence number. However, adding one extra requirement that both sides of the biclique must be non-empty dramatically decreases the tree-independence number of the class.

First, we prove that adding the above mentioned requirement results in a bisplit graph whose every induced cycle has bounded length.

\begin{lemma}
    \label{lem:bisplit-cycle-free}
    Let $d\ge 3$ be a positive integer and let $G$ be a $K_{1,d}$-free bisplit graph with a bisplit partition $(X,Y,Z)$ of $V(G)$ such that neither $Y$ nor $Z$ is empty.
    Then, $G$ contains no induced cycle of length at least $2d-1$.
\end{lemma}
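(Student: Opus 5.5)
Let $C$ be an induced cycle of $G$ of length $k$. I will bound $k$ by cases on how $C$ meets $Y$ and $Z$. The only structure needed is that $X$, $Y$ and $Z$ are independent and $Y\cup Z$ induces a complete bipartite graph. The hypothesis that $Y$ and $Z$ are both nonempty is used only to produce a vertex outside $C$ that is the centre of a large induced star.

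\emph{Case 1: $C$ meets both $Y$ and $Z$.} Since every vertex of $Y$ is adjacent to every vertex of $Z$, $C$ cannot contain two vertices of $Y$ and two of $Z$, as these would induce a $C_4$ inside $C$, which forces $k=4$. Likewise $C$ cannot contain three vertices of $Y$ together with one of $Z$, since that vertex of $Z$ would have degree at least $3$ in $C$. By symmetry, $C\cap(Y\cup Z)$ therefore consists of one vertex on one side and at most two on the other.
\begin{itemize}
\item If $C\cap(Y\cup Z)=\{y,z\}$, then $yz$ is an edge of $C$. The rest of $C$ is a path from $z$ to $y$ inside the independent set $X$, so it is a single vertex and $k=3$.
\item If $C\cap(Y\cup Z)=\{y_1,y_2,z\}$ (or symmetrically), then $y_1zy_2$ is a subpath of $C$. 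The remaining path from $y_2$ to $y_1$ lies in $X$, hence is one vertex, and $k=4$.
\end{itemize}
In both subcases $k\le 4<2d-1$, since $d\ge 3$.

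\emph{Case 2: $C$ avoids $Z$ (the case where $C$ avoids $Y$ is symmetric).} Now $V(C)\subseteq X\cup Y$ with $X$ and $Y$ independent, so $C$ alternates between $X$ and $Y$. Hence $k$ is even and $C$ contains exactly $k/2$ vertices of $Y$, which are pairwise nonadjacent. Because $Z\neq\emptyset$, pick $z\in Z$; note $z\notin V(C)$. The vertex $z$ is adjacent to all $k/2$ vertices of $C\cap Y$, so $z$ together with these vertices induces $K_{1,k/2}$. Since $G$ is $K_{1,d}$-free, $k/2\le d-1$, that is $k\le 2d-2<2d-1$. (If $C$ avoids both $Y$ and $Z$, it lies in the independent set $X$, which is impossible.)

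Combining the two cases, every induced cycle has length at most $\max\{4,2d-2\}<2d-1$, which proves the lemma. The only step needing care is the first case analysis: checking that an induced cycle of length at least $5$ meets the biclique $Y\cup Z$ in at most three vertices, and that the remaining vertices, lying in the independent set $X$, force the cycle to be short. Case 2 is where $K_{1,d}$-freeness and the nonemptiness of the opposite side are used.
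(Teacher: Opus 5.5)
Your proof is correct and follows essentially the same route as the paper. Both split the cycles the same way: cycles meeting both $Y$ and $Z$ have length at most $4$, and cycles alternating between $X$ and one side have length at most $2d-2$. The only cosmetic difference is that the paper first bounds $|Y|,|Z|\le d-1$ globally using a vertex of the opposite side, whereas you apply the same star argument directly to $V(C)\cap Y$.
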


\begin{proof}
    Let $d\ge 3$ and let $G$ be a $K_{1,d}$-free bisplit graph. Let $(X,Y,Z)$ be a bisplit partition of $V(G)$ such that neither $Y$ nor $Z$ are empty. Since $G$ is $K_{1,d}$-free, it follows that both $Y$ and $Z$ have size at most $d-1$. Now fix $C$ to be any induced cycle in $G$.

    First, suppose that $C$ does not contain both a vertex from $Y$ and a vertex from $Z$. Without loss of generality, we assume that $C$ contains no vertex from $Z$. Since both $Y$ and $X$ are independent sets, it follows that $C$ is an even cycle with exactly half of its vertices belonging to $Y$ and the other half belonging to $X$. Now, since $|Y| < d$, $C$ has length at most $2d-2$.

    Second, we consider the case when $C$ contains both a vertex $y\in Y$ and a vertex $z\in Z$. Since $Y\cup Z$ induces a complete bipartite graph, $yz\in E(G)$. If $y$ and $z$ are the unique vertices of $C$ contained in $Y\cup Z$, then, since $X$ is independent, $C$ is of length at most $3 < 2d - 1$ for all $d\ge 3$.
    Therefore, without loss of generality, we may assume that $C$ contains another vertex from $Y$ distinct from $y$, say $y'$.
    In that case, $C$ contains an induced path $yzy'$. It follows that either $C$ contains another vertex from $Z$, in which case $C$ is a cycle of length $4 < 2d - 1$ for all $d\ge 3$, or $z$ is the only vertex of $C$ from $Z$.
    In that case, since $Y\cup Z$ induces a complete bipartite graph, $C$ cannot contain vertices from $Y\setminus \{y,y'\}$ and $Z\setminus \{z\}$.
    Therefore, $C$ has length at most $4$.
    This concludes the proof that $G$ contains no cycle of length at least $2d-1$.
\end{proof}

Note that in the above lemma if we set $d = 1$, then $G$ is edgeless, and if we set $d = 2$, then $2d-1 = 3$. However, it is easy to observe that $G$ can contain a $C_3$ while not containing any induced cycle of length at least $4$. 

For a graph class $\mathcal{C}$, the \emph{hereditary closure} of $\mathcal{C}$ is the minimal graph class containing all graphs in $\mathcal{C}$ that is closed under taking induced subgraphs.

\begin{lemma}
    Let $d\ge 2$ be a positive integer and let $\mathcal{G}$ be the class of $K_{1,d}$-free bisplit graphs whose bisplit partition $(X,Y,Z)$ of $V(G)$ is such that neither $Y$ nor $Z$ are empty. Let $\hat{\mathcal{G}}$ be the hereditary closure of $\mathcal{G}$. Then, for every $G\in \hat{\mathcal{G}}$, $\atw(G) \leq d-1$.
\end{lemma}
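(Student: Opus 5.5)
The plan is to build an explicit tree decomposition by hand instead of invoking \cref{thm:atw_K1d-free_longest_cycle}. Combined with \cref{lem:bisplit-cycle-free}, that theorem would only give a bound of order $d^2$, not $d-1$. First I reduce to graphs in $\mathcal{G}$ itself. Every $G\in\hat{\mathcal{G}}$ is an induced subgraph of some $H\in\mathcal{G}$, and the tree-independence number does not increase under taking induced subgraphs: restrict every bag of a decomposition of $H$ to $V(G)$, which keeps it a tree decomposition and cannot increase the independence number of any bag. So it suffices to show $\atw(H)\le d-1$ for every $H\in\mathcal{G}$ with bisplit partition $(X,Y,Z)$, where $Y\neq\emptyset\neq Z$.

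Next I bound $|Y|$ and $|Z|$. Fix $z\in Z$. Then $z$ is adjacent to every vertex of $Y$, and $Y$ is independent, so $|Y|\ge d$ would give an induced $K_{1,d}$ centred at $z$. Hence $|Y|\le d-1$, and symmetrically, using a vertex of $Y$, $|Z|\le d-1$. Since $Y\cup Z$ induces a complete bipartite graph, every independent subset of $Y\cup Z$ lies entirely in $Y$ or entirely in $Z$. Therefore $\alpha(H[Y\cup Z])\le d-1$.

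Then I define a star-shaped tree decomposition. The central node has bag $Y\cup Z$. For each $x\in X$ I attach a leaf with bag $B_x=\{x\}\cup (N(x)\cap (Y\cup Z))$. The decomposition axioms hold as follows.
\begin{itemize}
\item Every vertex appears in some bag: vertices of $Y\cup Z$ are in the centre, and each $x\in X$ is in $B_x$.
\item Every edge is covered. Since $X$ is independent, each edge lies inside $Y\cup Z$ (covered by the centre) or joins some $x\in X$ to a neighbour in $Y\cup Z$ (covered by $B_x$).
\item The nodes containing any fixed vertex induce a subtree. Each $x\in X$ lies in exactly one bag. Each vertex of $Y\cup Z$ lies in the centre plus some leaves, which together form a substar.
\end{itemize}

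Finally I bound the independence number of each leaf bag $B_x$. An independent set in $B_x$ that contains $x$ is just $\{x\}$, because all other vertices of $B_x$ are neighbours of $x$. An independent set in $B_x$ avoiding $x$ lies inside $Y\cup Z$, so it has size at most $d-1$. Hence $\alpha(H[B_x])\le\max\{1,d-1\}=d-1$, using $d\ge 2$. The only real subtlety, and the step I would check most carefully, is choosing leaf bags that contain just $N(x)\cap(Y\cup Z)$ rather than all of $Y\cup Z$. The naive choice $\{x\}\cup Y\cup Z$ could contain $x$ together with $d-1$ non-neighbours of $x$ in $Y$, giving an independent set of size $d$. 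With the restricted bags everything closes with the bound $d-1$.
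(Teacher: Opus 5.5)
Your proposal is correct and is essentially the paper's proof: the same reduction from $\hat{\mathcal{G}}$ to $\mathcal{G}$, the same bound $|Y|,|Z|\le d-1$, and the same star decomposition with centre bag $Y\cup Z$ and one leaf bag per $x\in X$. Your leaf bags $\{x\}\cup(N(x)\cap(Y\cup Z))$ coincide with the paper's $N[x]$ because $X$ is independent, and you spell out the decomposition axioms and the bag independence bounds that the paper leaves implicit.
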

\begin{proof}
    Let $G$ be any graph from $\hat{\mathcal{G}}$.
    Then $G$ is an induced subgraph of a bisplit graph $G'$ with bisplit partition $(X,Y,Z)$ such that neither $Y$ nor $Z$ is empty.
    Furthermore, since $G'$ is $K_{1,d}$-free, we have that $|Y|, |Z| \leq d-1$.
    Then it suffices to show that $\atw(G') \leq d-1$.

    We construct a tree decomposition $(T,\{B_t\}_{t\in V(T)})$ of $G'$ as follows. Let $T$ be a star with $V(T) := \{v_0\} \cup \{v_x \, : \, x\in X\}$ where $v_0$ is the center of the star $T$. Let the bag $B_{v_0}:= Y \cup Z$, and for all $x\in X$ let $B_{v_x}:= N_{G'}[x]$. It follows that $(T, \{B_t\}_{t\in V(T)})$ is a tree decomposition of $G'$, with each bag having independence number at most $d-1$, as desired.
\end{proof}




\section{Outerstring Graphs}\label{sec:outerstring}

In this section, we prove that the class of $K_{1,d}$-free outerstring graphs has bounded tree-independence number.
A graph $G$ is called \emph{outerstring} if $G$ may be represented as an intersection graph of curves, called \emph{grounded curves}, lying inside a unit disc such that for each such curve, one of the endpoints, called \emph{grounded point}, is on the boundary of the disk.

Throughout this section, for a given outerstring graph, without loss of generality, we assume that its intersection model satisfies the following conditions:
\begin{enumerate}
    \item no grounded curve intersects itself,
    \item all grounded points are distinct,
    \item grounded curves intersect the boundary of the disk only at their respective grounded points, and
    \item no three grounded curves intersect a single point.
\end{enumerate}

Now observe that for two grounded curves that do not intersect each other, we can find a curve that bisects the disk, that does not intersect with both grounded curves.
This observation can be extended for arbitrary non-adjacent induced subgraphs of an outerstring graph.

\begin{observation}\label{obs:outerstring_separator}
    Let $G$ be an outerstring graph with a given intersection model of grounded curves in a disk $D$.
    Let $\eta$ be a curve contained in $D$ that does not intersect itself and both endpoints are on the boundary circle of $D$.
    Let $S$ be the set of vertices whose corresponding grounded curves intersect with $\eta$, and let $A$ and $B$ be the set of vertices such that their corresponding grounded curves are contained in two different components of $D-\eta$, respectively.
    Then $S$ is a separator between $A$ and $B$.
\end{observation}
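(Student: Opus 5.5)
The plan is to argue by contradiction using connectivity of curves. Suppose $a\in A$ and $b\in B$ are joined by a path $a=v_1v_2\ldots v_m=b$ in $G-S$. Each vertex $v_i$ on this path lies outside $S$, so its grounded curve $\gamma_i$ is disjoint from $\eta$. A grounded curve is connected, so each $\gamma_i$ lies entirely within a single component of $D-\eta$.

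The key topological input is the Jordan curve theorem, applied to $\eta$ together with one of the boundary arcs between its endpoints. Because $\eta$ is a simple curve whose two endpoints lie on $\partial D$ and which otherwise runs through $D$, it splits $D-\eta$ into exactly two components. Consequently $A$ and $B$ are assigned to different components, and every curve disjoint from $\eta$ lies in one of these two.

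Now follow the path from $v_1$ to $v_m$. Since $v_iv_{i+1}\in E(G)$, the curves $\gamma_i$ and $\gamma_{i+1}$ share a point $p$. That point is not on $\eta$, so $p$ belongs to exactly one component of $D-\eta$. It follows that $\gamma_i$ and $\gamma_{i+1}$ lie in the same component.

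Inducting along the path, $\gamma_1$ and $\gamma_m$ lie in the same component of $D-\eta$. This contradicts the assumption that $a\in A$ and $b\in B$ are in different components, so no such path exists and $S$ separates $A$ from $B$.

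Two boundary cases need a remark. If $a$ or $b$ itself lies in $S$, the statement is vacuous for that vertex, since the path must avoid $S$. A curve may also touch $\partial D$ at its grounded point, but it still lies in the closure of a single component on the disk side, which is all the argument uses.

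The only step needing care is the assertion that $D-\eta$ has exactly two components and that a connected set avoiding $\eta$ lies in one of them. For this I would rely on the standard Jordan curve theorem, treating the disk as a closed region and $\eta$ as a chord-like arc, rather than reproving it.
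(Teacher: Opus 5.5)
Your argument is correct and is the reasoning the paper leaves implicit, since the paper states this as an observation without proof: a curve avoiding $\eta$ is a connected subset of $D-\eta$, so it lies in one component, and two such intersecting curves share a point and hence a component, so no path in $G-S$ can pass from the component of $A$ to that of $B$. You never need the Jordan curve theorem or the claim that $D-\eta$ has exactly two components, which can fail if $\eta$ touches $\partial D$ away from its endpoints; also $A\cap S=B\cap S=\emptyset$ holds automatically, so your ``vacuous'' boundary case never arises.
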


Another main tool for this section is the theorem from \cite{choi2025excludingladderinducedminor}.

\begin{theorem}[\cite{choi2025excludingladderinducedminor}]\label{thm:pathandcycle}
    Let $d\geq 2$ and $\eta\geq d$ be positive integers and let $G$ be a $K_{1,d}$-free graph with $\alpha\text{-}\mathsf{tw}(G)\geq 16d\eta$.
    Then $G$ contains an induced path $P$ and an induced cycle $C$ such that
    \begin{itemize}
        \item $P$ and $C$ are non-adjacent, and
        \item if $S$ is a separator between $P$ and $C$ in $G$, then $\alpha(S)\geq \eta$.
    \end{itemize}
\end{theorem}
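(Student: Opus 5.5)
The plan is to go through a well-linkedness characterisation of large tree-independence number, using $K_{1,d}$-freeness to turn $\alpha$-bounds into bounds on bounded neighbourhoods. The standard balanced-separator argument gives the following. If every vertex set $W$ with $\alpha(G[W])$ large has a balanced separator $S$ with $\alpha(S)<k$, then $\atw(G)=O(k)$: build the decomposition recursively, keeping in each bag the previous separator together with the new one. Contrapositively, $\atw(G)\ge 16d\eta$ yields a set $W$ that is \emph{$\alpha$-well-linked at scale $c d\eta$}, i.e.\ for every $S$ with $\alpha(S)<c d\eta$, some component of $G-S$ contains most of $W$ in the $\alpha$-sense. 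The constant $16$ is reserved so that each of the later losses below costs only a bounded multiple of $d\eta$.

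The first step is to produce the cycle $C$. Since $G$ is not chordal in any strong sense (a chordal graph has $\atw\le 1$), I would pick a hole $C$ meeting the ``big side'' of $W$. I would do this by taking two parts $W_1,W_2$ of $W$ that are well linked to each other, together with two internally disjoint induced paths between them, and closing these up into an induced cycle by shortcutting along chords.

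The second step is to produce the path $P$ far from $C$. Here $K_{1,d}$-freeness is used: $\alpha(N[v])\le d-1$, so for any induced path or cycle $Q$ a vertex $v$ has at most $2(d-1)$ non-consecutive neighbours on $Q$. More importantly, deleting $N[X]$ for a set $X$ with $\alpha(X)\le t$ removes a set of independence number at most $(d-1)t$. I would take $P$ to be an induced path inside the part of $W$ that survives after deleting $N[C']$ for a suitable short "anchor" portion $C'$ of $C$ of small $\alpha$. This makes $P$ non-adjacent to $C$ while still keeping it inside the well-linked region.

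The third step is the separation claim. Suppose $S$ separates $P$ from $C$ with $\alpha(S)<\eta$. Then $S\cup N[\text{anchor}]$ has $\alpha<\eta+(d-1)\cdot O(\eta)\le c d\eta$, so by well-linkedness one component of its complement contains most of $W$. Because both $P$ and $C$ were chosen to meet $W$ in pieces of $\alpha$ at least about $\eta$, and each of them is connected, both $P$ and $C$ would have to touch that big component. This is a contradiction, since they would then be joined by a path avoiding $S$.

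The main obstacle is ensuring that $P$ and $C$ each carry $\alpha$-mass exceeding the separator budget \emph{while} staying non-adjacent and induced. An induced path or cycle may pass only through a tiny $\alpha$-portion of $W$. To handle this, I would first try to choose $C$ and $P$ as minimal-length connections between many pairwise far-apart vertices of $W$, spaced so that their closed neighbourhoods are disjoint. Independent vertices of $W$ spread along a path give $\alpha$ linear in their number, and this is exactly where the hypothesis $\eta\ge d$ and the factor $16d$ should be consumed.
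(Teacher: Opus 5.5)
The paper does not prove this statement itself; it is quoted from Choi et al. So your outline has to stand on its own, and it does not. The framework is reasonable: an $\alpha$-well-linked set $W$ extracted from large $\atw$ via balanced separators. But the step that carries all the content is missing.

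Your third step needs that for every $S$ with $\alpha(S)<\eta$, some vertex of $V(P)\setminus S$ and some vertex of $V(C)\setminus S$ lie in the big component. The separator may contain vertices of $P$ or $C$, so already $S=V(C)$ shows you need $\alpha(V(C))\ge\eta$. A cycle obtained in your first step by closing up two paths and ``shortcutting along chords'' may be a triangle or a $C_4$, and shortcutting gives no control over which vertices of $W$ survive.

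More importantly, ``one component of $G-S'$ contains most of $W$'' only bounds the $\alpha$-mass of $W$ outside that component by roughly the order of $S'$ (or by $\alpha(W)/2$ in the balanced-separator form). The set $S'=S\cup N[\text{anchor}]$ you apply it to has order $\Theta(d\eta)$. So meeting $W$ in a piece of independence number ``about $\eta$'' does not force $P$ or $C$ into the big component. What you need is an induced path and an induced cycle, anticomplete to each other, each passing through an independent subset of $W$ larger than the separator order.

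Your last paragraph concedes this is the main obstacle and offers only something to try. Concatenating shortest paths between far-apart vertices of $W$ does not give an induced path or cycle, since later segments can have edges to earlier ones. Restoring inducedness by shortcutting can skip exactly the vertices of $W$ you wanted to keep.

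Your second step also contains concrete errors. The bound $\alpha(N[X])\le (d-1)\alpha(X)$ is false. Take $K_n$ with one pendant vertex attached to each vertex: it is claw-free, the clique $X$ has $\alpha(X)=1$, yet $\alpha(N[X])=n$. The correct bound is $\alpha(N[X])\le (d-1)|X|$, which costs only a factor $2$ when $X$ is a subpath of an induced path or cycle.

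Also, deleting $N[C']$ for a short anchor $C'\subseteq V(C)$ does not make $P$ anticomplete to $C$. For that, $P$ must avoid all of $N[V(C)]$, whose independence number is only bounded by $(d-1)|V(C)|$. Here heaviness and non-adjacency pull against each other. $C$ must be robustly attached to the well-linked set, which in your scheme means long and routed through much of $W$. But a natural way to find $P$ is inside the big component of $G-N[V(C)]$, which requires $(d-1)|V(C)|$ to stay below the well-linkedness scale $cd\eta$, i.e.\ $|V(C)|=O(\eta)$.

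Nothing in the proposal produces an induced cycle (or path) that is both this short and not separable from the well-linked set by any set of independence number less than $\eta$. Without such a construction the argument does not go through.
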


Now we show that if an outerstring graph contains an induced cycle $C$, then we can choose at most $6$ vertices from it so that their closed neighbourhood becomes a separator between $C$ and any subgraph non-adjacent to~$C$. 


\begin{lemma}\label{lem:outerstring}
    Let $G$ be an outerstring graph.
    Let $T\subseteq V(G)$ be a set of vertices and let $C$ be an induced cycle in $G-N[T]$. 
    Then there exists $Y\subseteq V(C)$ with $|Y|\leq 6$ such that $N[Y]$ is a separator between $T$ and $V(C)$.
\end{lemma}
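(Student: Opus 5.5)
The plan is to argue geometrically inside a fixed grounded-curve model in the disk $D$, using \cref{obs:outerstring_separator} repeatedly. For a vertex $v$ write $\gamma_v$ for its grounded curve and $p_v$ for its grounded point. Since $V(C)\cap N[T]=\emptyset$, no curve of $T$ meets any curve of $C$. Hence every curve $\gamma_t$ with $t\in T$ lies in a single component of $D\setminus U$, where $U=\bigcup_{v\in V(C)}\gamma_v$. Since $\gamma_t$ is grounded, this component meets the boundary circle $\partial D$.

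\textbf{Cutting curves from adjacent pairs.} The basic tool is this: for an edge $ab$ of $C$, follow $\gamma_a$ from $p_a$ to a point $x\in\gamma_a\cap\gamma_b$, then follow $\gamma_b$ back to $p_b$. After a small perturbation this gives a simple curve $\eta_{ab}\subseteq\gamma_a\cup\gamma_b$ with both endpoints on $\partial D$. Any curve crossing $\eta_{ab}$ meets $\gamma_a$ or $\gamma_b$, so the set of vertices crossing $\eta_{ab}$ lies in $N[\{a,b\}]$. By \cref{obs:outerstring_separator}, $N[\{a,b\}]$ therefore separates the vertices whose curves lie strictly on one side of $\eta_{ab}$ from those lying strictly on the other side.

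\textbf{Choosing the pairs and separating.} I would use the cyclic order of the grounded points $\{p_v : v\in V(C)\}$ on $\partial D$ to choose at most three such edges $a_1b_1,a_2b_2,a_3b_3$ of $C$, giving $|Y|\le 6$ for $Y=\{a_i,b_i\}$. The goal of the choice is that the cuts $\eta_{a_ib_i}$ carve out a closed region $R\subseteq D$ with two properties:
\begin{itemize}
\item every curve of $C-N[Y]$ stays inside $R$;
\item every point of $\partial D$ outside $R$ lies on the far side of some $\eta_{a_ib_i}$.
\end{itemize}
Natural candidates are the edges at the vertices whose grounded points are extreme in the cyclic order, for example the pair realising the first and last grounded points. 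These are the intended properties of the choice, not yet verified. Once such a region exists, consider any path from $t\in T$ to $C$ avoiding $N[Y]$. It cannot cross any $\eta_{a_ib_i}$. Since its first vertex $t$ is grounded and non-adjacent to $C$, it must reach $C$ by entering $R$ through $\partial R\setminus\partial D$, which consists of pieces of the $\eta_{a_ib_i}$. This is a contradiction. Vertices of $C$ lying in $N[Y]$ are already in the separator, and by the remark in the preliminaries this is allowed.

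\textbf{Main obstacle.} The hard step is proving that three cuts always suffice to enclose all of $C$ while excluding every boundary-touching face that could contain a curve of $T$. The grounded points of $C$ may interleave with the faces containing $T$-curves in complicated ways. My approach would be to exploit that $C$ is an \emph{induced} cycle. Two non-consecutive curves of $C$ do not cross, so by \cref{obs:outerstring_separator} applied to cuts $\eta$ built from consecutive pairs, the two subpaths of $C$ between two such cuts lie on opposite sides. This should force the grounded points of $C$ to appear in essentially cyclic order along $\partial D$. A face of $D\setminus U$ touching $\partial D$ between two grounded points $p_u,p_w$ that are consecutive in this order would then be sealed off by the cut through $u,w$ when $uw\in E(C)$. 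In the worst case, when the order wraps around, at most a bounded number of further cuts are needed, and I expect three to be enough. This is where the constant $6$ should come from.
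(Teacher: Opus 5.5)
\textbf{There is a genuine gap.} It is exactly the step you call the main obstacle: you never show that at most three edges $a_ib_i$ give a region $R$ with your two properties. That step is the whole content of the lemma. Worse, in the generality you are aiming for, with curves of $T$ spread over several boundary faces of $D\setminus U$, no such choice exists, because the statement as written (arbitrary $T$) is false.

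\textbf{Counterexample.} Draw an induced cycle $C=c_1\cdots c_m$ with $m\ge 19$ as radial spokes from grounded points $p_1,\dots,p_m$ in cyclic order. Each $\gamma_{c_i}$ bends near mid-radius and runs just far enough around to cross the spoke of $c_{i+1}$. The boundary arc between $p_i$ and $p_{i+1}$ then lies in a face $F_i$ whose boundary contains the spoke of $c_i$. In each $F_i$ add a short grounded curve $t_i$ meeting nothing, and a grounded curve $s_i$ meeting only $t_i$ and $\gamma_{c_i}$. With $T=\{t_1,\dots,t_m\}$ we get $N[T]=T\cup\{s_1,\dots,s_m\}$, so $C$ is an induced cycle of $G-N[T]$. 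But for any $Y\subseteq V(C)$ with $|Y|\le 6$ we have $|N_C[Y]|\le 18<m$. So some $c_i\notin N_C[Y]$, hence $t_i,s_i,c_i\notin N[Y]$, and $t_is_ic_i$ is a $T$--$V(C)$ path in $G-N[Y]$.

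Any proof must therefore use that all curves of $T$ lie in a \emph{single} face $F$ of $D\setminus U$. This holds when $G[T]$ is connected, which covers the only use of the lemma ($T=V(P)$ in the proof of the main theorem). The paper's proof relies on this tacitly when it asserts that exactly one component $K$ touches both $\Gamma_T$ and $\Gamma_C$.

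\textbf{The argument once $T$ lies in one face.} Let $F$ meet $\partial D$ in the arc between two consecutive grounded points $p_u,p_v$ of $C$. Then neither your edge cuts nor any ordering claim is needed. (The grounded points of an induced cycle need not follow its cyclic order: an induced $C_4$ $abcd$ can be grounded in the order $a,c,b,d$.)

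Let $Y$ be the set of vertices of $C$ whose curves meet $\overline{F}$. On a path from $T$ to $V(C)$, all curves before the first one meeting $U$ lie in $F$. So that first curve enters $U$ at a point of $\overline{F}$, i.e.\ on a curve of $Y$, and hence $N[Y]$ separates.

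To bound $|Y|$, take $w\in Y\setminus\{u,v\}$. Let $\nu$ run from the boundary arc of $F$ through $F$ to $\gamma_w$, and then along $\gamma_w$ to $p_w$. The endpoints of $\nu$ separate $p_u$ from $p_v$ on $\partial D$, and the only cycle curves it meets belong to $N_C[w]$. Since $C-N_C[w]$ is a path, its curves lie on one side of $\nu$, so $u$ or $v$ lies in $N_C[w]$. Hence $Y\subseteq N_C[u]\cup N_C[v]$ and $|Y|\le 6$.

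This is the idea your sketch is missing. Deleting a closed neighbourhood from an induced cycle leaves a connected path, which cannot straddle a cut met only by curves of $N_C[w]$.
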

\begin{proof}
    Fix an intersection model of grounded curves in a disk $D$ that gives $G$.
    Let $\Gamma_T$ and $\Gamma_C$ be the sets of curves in the intersection model that correspond to $T$ and $V(C)$ respectively.
    Then since $T$ and $V(C)$ are non-adjacent, there exists exactly one connected component of $D-\bigcup_{\gamma\in \Gamma_T\cup \Gamma_C}$ that is incident to a curve in both $\Gamma_T$ and $\Gamma_C$ (see \Cref{fig:outerstring_graphs}).
    Denote this component by $K$.
    Moreover, the boundary of $K$ that lies on the boundary of the disk $D$ consists of exactly two arcs, say $a_1$ and $a_2$.
    Also let $a_3$ be the minimal arc of $D$ that contains all grounded points of curves in $\Gamma_C$.
    Then we can find a curve $\eta$ contained in $K$ satisfying following conditions:
    \begin{itemize}
        \item $\eta$ does not cross itself,
        \item one endpoint of $\eta$ is contained in $a_1$ and the other endpoint is contained in $a_2$,
        \item one connected component of $D-\eta$ contains all grounded curves corresponding to $T$, and
        \item the other connected component of $D-\eta$, contains all grounded curves corresponding to $V(C)$.
    \end{itemize}
    Let $Y\subseteq V(C)$ be the set of vertices whose corresponding grounded curves are incident to $K$ (see \Cref{fig:outerstring_graphs}).
    Note that $N[Y]$ is a separator between $T$ and $V(C)$ (see \Cref{fig:outerstring_graphs}).
    
    We claim that $|Y|\leq 6$.
    Let $u,v\in V(C)$ be the vertices whose corresponding grounded points are endpoints of $a_3$.
    Then we have $u,v\in Y$.
    Suppose that $w\in Y$, and let $\gamma_u,\gamma_v$, and $\gamma_w$ be the grounded curves corresponding to $u,v$, and $w$ respectively.

    We can draw a new curve $\eta'$ that starts from $\eta$ and ends at $\gamma_w$, does not cross itself, and does not intersect with any other grounded curve corresponds to vertices in $V(C)\setminus \{w\}$.
    Then we can find a curve $\nu\subseteq \eta'\cup \gamma_w$, both of whose endpoints are on the boundary of $D$, that contains $\eta'$ and contained in $\eta\cup \eta'\cup \gamma_w$. (See \Cref{fig:outerstring_graphs}.)


    Suppose that both $\gamma_u$ and $\gamma_v$ do not intersect with $\nu$.
    Then $\gamma_u$ and $\gamma_v$ are in distinct components of $D-\nu$.
    This implies that $N_C[w]$ is a separator between $u$ and $v$ in $C$.
    Since $C$ is an induced cycle, this is only possible if either $u$ or $v$ is contained in $N_C[w]$, which is a contradiction.
    Therefore, $\nu$ either intersects $\gamma_u$ or $\gamma_v$, and this is only possible if $\gamma_w$ intersects either $\gamma_u$ or $\gamma_v$.
    In other words, we have $Y\subseteq N_C[u]\cup N_C[v]$, which gives $|Y|\leq 6$.
\end{proof}
    
\begin{figure}[t]
    \centering
    \includegraphics[width=\linewidth]{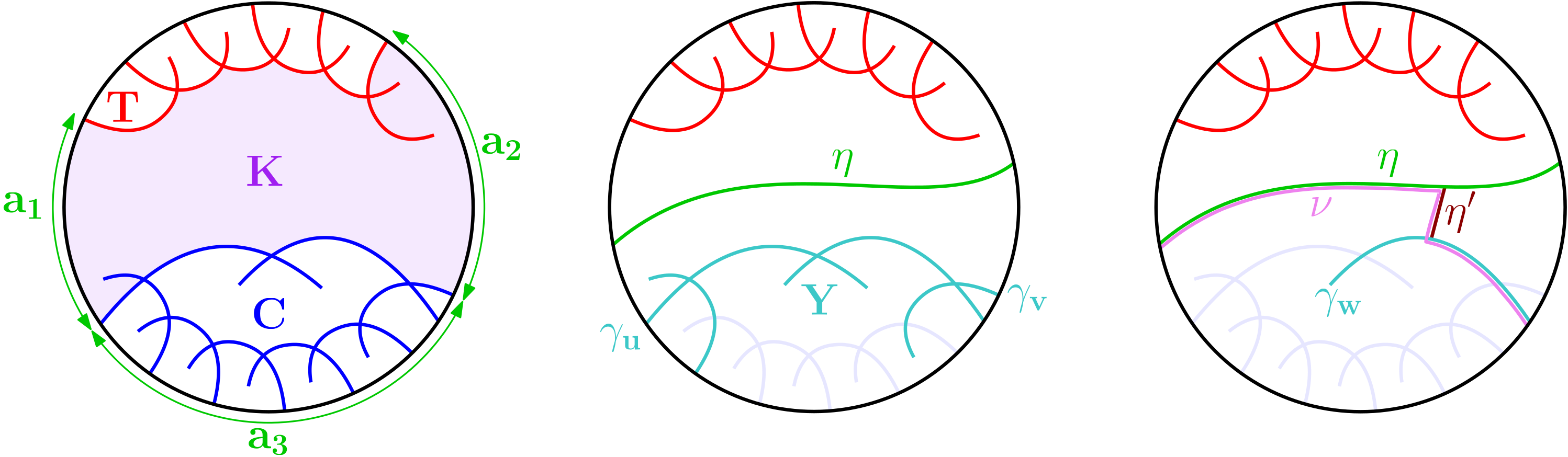}
    \caption{Outerstring graphs in \Cref{lem:outerstring}: (Left) We can find an area $K$ between the grounded curves corresponding to~$T$ and $V(C)$. (Center) A curve $\eta$ bisects the disk so that the grounded curves corresponding to~$T$ and $V(C)$ are contained in distinct components. $Y$ is the set of vertices whose corresponding grounded curves are incident to $K$. (Right) A curve $\nu$ should intersect either $\gamma_u$ or $\gamma_v$.}
    \label{fig:outerstring_graphs}
\end{figure}


Finally, we show that $K_{1,d}$-free outerstring graphs have bounded tree-independence number by using \cref{obs:outerstring_separator}.

\begin{theorem}\label{thm:outerstring}
    Let $G$ be a $K_{1,d}$-free outerstring graph.
    Then $\atw(G)< 16d(6d-5)$.
\end{theorem}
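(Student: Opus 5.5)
We argue by contradiction, combining \cref{thm:pathandcycle} with \cref{lem:outerstring}. Set $\eta := 6d-5$, which satisfies $\eta\ge d$ for $d\ge 1$. For $d=1$ the graph $G$ is empty and there is nothing to prove, so assume $d\ge 2$. Suppose that $\atw(G)\ge 16d(6d-5)=16d\eta$. Then \cref{thm:pathandcycle} gives an induced path $P$ and an induced cycle $C$ in $G$ such that $P$ and $C$ are non-adjacent, and every separator $S$ between $V(P)$ and $V(C)$ has $\alpha(S)\ge \eta = 6d-5$. We will contradict this by exhibiting a separator of independence number at most $6d-6$.

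Take $T:=V(P)$. Since $P$ and $C$ are disjoint and non-adjacent, $C$ is an induced cycle of $G-N[T]$. Applying \cref{lem:outerstring} gives a set $Y\subseteq V(C)$ with $|Y|\le 6$ such that $N[Y]$ separates $T=V(P)$ from $V(C)$. By the remark in the preliminaries, a separator may intersect $A$ or $B$, so it is harmless that $Y\subseteq V(C)$ lies inside $N[Y]$.

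It remains to bound $\alpha(G[N[Y]])$. For each $y\in Y$, $G$ is $K_{1,d}$-free, so any independent set inside $N(y)$ has size at most $d-1$. An independent set inside $N[y]$ either equals $\{y\}$ or lies in $N(y)$, so $\alpha(G[N[y]])\le d-1$ when $d\ge 2$. Hence
\[
\alpha(G[N[Y]])\le \sum_{y\in Y}\alpha(G[N[y]])\le 6(d-1)=6d-6<6d-5=\eta,
\]
which contradicts the separator property from \cref{thm:pathandcycle}. Therefore $\atw(G)<16d(6d-5)$.

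The substantive geometric work is already contained in \cref{lem:outerstring}, so the main point to check is that its hypotheses hold: $C$ must avoid $N[T]$, and this follows directly from the non-adjacency of $P$ and $C$ guaranteed by \cref{thm:pathandcycle}. The rest is the routine arithmetic above, where the choice $\eta=6d-5$ is exactly what makes the covering by six closed neighbourhoods, each contributing at most $d-1$, fall short of $\eta$.
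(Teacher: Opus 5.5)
Your proposal is correct and is essentially the paper's proof: assume $\atw(G)\ge 16d(6d-5)$, apply \cref{thm:pathandcycle} with $\eta=6d-5$, apply \cref{lem:outerstring} with $T=V(P)$, and use $\alpha(N[Y])\le 6(d-1)<6d-5$ to reach a contradiction. You also state explicitly two hypotheses the paper uses without comment, namely $\eta\ge d$ and that $C$ lies in $G-N[T]$; one small wording fix is that for $d=1$ the graph is edgeless rather than empty, though $\atw(G)\le 1<16$ so that case is still immediate.
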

\begin{proof}
    Suppose that $\atw(G)\geq 16d(6d-5)$.
    Then by \cref{thm:pathandcycle}, we can find an induced path $P$ and an induced cycle $C$ such that they are non-adjacent and any separator $S$ between $P$ and $C$ satisfies $\alpha(S)\geq 6d-5$.
    Applying \cref{lem:outerstring}, we can find $Y\subseteq V(C)$ with $|Y|\leq 6$ such that $N[Y]$ is a separator between $P$ and $C$.
    However, $\alpha(N[Y])\leq (d-1)\cdot |Y|\leq 6d-6$, which is a contradiction.
    Therefore, we have $\atw(G)< 16d(6d-5)$.
\end{proof}

Note that An, Oh, and Xue~\cite{AOX2024} showed that if an outerstring graph is planar, then it has treewidth $O(\log n)$.
Since $n\times n$-grid has treewidth $n$, a sufficiently large grid cannot be an outerstring graph.
Therefore, since the class of outerstring graphs is closed under taking induced subgraphs, \cref{thm:outerstring} is a special case for \cref{conj:inducedgridequiv}.

Next, we consider two related graph classes.
A \emph{circle graph} is a graph whose vertices can be associated with a finite set of chords of a circle such that two vertices are adjacent if and only if the corresponding chords cross.
A \emph{circular permutation graph} is a graph which corresponds to an intersection model consisting of paths between two concentric circles, such that no two paths intersect in more than one point.

Observe that circle graphs are outerstring graphs.
This can be seen by slightly shortening each chord of the intersection model, so that the intersection model does not change.
Similarly, the circular permutation graphs are also outerstring graphs, by deleting the inner circle.
Hence, we have the following corollary.

\begin{corollary}
    Let $G$ be a $K_{1,d}$-free graph that is either a circle graph or a circular permutation graph.
    Then $\alpha\text{-}\mathsf{tw}(G)<16d(6d-5)$.
\end{corollary}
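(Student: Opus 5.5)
The plan is to reduce the corollary directly to \cref{thm:outerstring}, which already gives $\atw(G)<16d(6d-5)$ for every $K_{1,d}$-free outerstring graph. It therefore suffices to show that every circle graph and every circular permutation graph is outerstring. Being $K_{1,d}$-free is a property of $G$ itself and does not depend on the representation, so once $G$ is known to be outerstring, \cref{thm:outerstring} applies verbatim.

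For circle graphs, I would start from a chord model in which all $2n$ endpoints are distinct points on the circle. This is a standard general-position assumption; it is achieved by perturbing endpoints slightly, which preserves all crossings and non-crossings. I then take the circle as the boundary of the disk $D$. Each chord is replaced by the curve obtained by keeping one endpoint on the boundary, as the grounded point, and pulling the other endpoint slightly into the interior of $D$.

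The one point to check is that the intersection pattern is unchanged. Two chords with distinct endpoints cross if and only if their endpoints interleave, and in that case they meet in a single interior point. If every shortening is by a small enough $\varepsilon$, chosen smaller than the distance from every endpoint to every other chord, then crossing points are preserved. Likewise, no new intersection is created, since disjoint chords stay disjoint after being shortened. So the shortened chords form a grounded-curve model of $G$ that meets the conventions assumed at the start of \cref{sec:outerstring}.

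For circular permutation graphs, I would take the model of paths between two concentric circles and remove the inner disk. Each path then becomes a curve in the closed disk bounded by the outer circle, with one endpoint on that boundary. Curves meet exactly when the original paths did, so the intersection graph is unchanged. After a small perturbation near the inner endpoints, so that distinct endpoints on the inner circle do not coincide, this is a grounded-curve model and $G$ is outerstring. I expect no real obstacle here; the only care needed is the general-position perturbation in the two representations.
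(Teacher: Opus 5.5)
Your proposal is correct and follows the paper's argument: reduce to \cref{thm:outerstring} by showing both classes are outerstring, via slightly shortening each chord of a circle model and via deleting the inner disk of a circular permutation model. One small remark: the perturbation of the inner endpoints is unnecessary, since only grounded points must be distinct and in the standard circular permutation model the endpoints are distinct anyway; done carelessly, separating two paths that meet only at a shared inner endpoint would actually delete an edge.
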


Now we consider a generalisation of outerstring graphs such that grounded points could lie on several circles.
A graph $G$ is said to be a $k$-outerstring graph if it is an intersection graph of curves $\gamma\in \Gamma$ such that there exists disjoint circles $C_1,C_2,\ldots,C_k$ on a plane so that none of the circles contains another circle and each curve in $\Gamma$ intersects exactly once one of $C_i$ at one of the endpoints of $\gamma$.
Note that the definition of $1$-outerstring graphs is equal to the definition of outerstring graphs.
Observe that we may change the `circles' in the definition to `closed curves' without affecting the graph class defined.

\begin{theorem}\label{thm:k-outerstring}
    Let $G$ be a $K_{1,d}$-free $k$-outerstring graph.
    Then $\alpha\text{-}\mathsf{tw}(G)< 16d(6d-5)+2(d-1)(k-1)$.
\end{theorem}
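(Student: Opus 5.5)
Our plan is induction on $k$, reducing to \cref{thm:outerstring}. The extra term $2(d-1)(k-1)$ suggests that each additional circle costs the closed neighbourhoods of two vertices, each of independence number at most $d-1$ in a $K_{1,d}$-free graph.

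The base case $k=1$ is \cref{thm:outerstring}. For $k\ge 2$, fix a model with circles $C_1,\dots,C_k$ and curve family $\Gamma$. If some circle $C_i$ is grounding no curve, delete it and apply induction. Likewise, if the curves grounded on $C_k$ are pairwise far from all others, the graph splits further. The key step is as follows. Suppose the curves grounded on distinct circles form a connected configuration. Then pick a shortest path in $G$ from a vertex $u$ grounded on $C_k$ to a vertex $w$ grounded on some other circle $C_j$. More simply, pick two vertices $u,w$ with intersecting curves $\gamma_u\cap\gamma_w\neq\emptyset$, where $\gamma_u$ is grounded on $C_k$ and $\gamma_w$ is grounded on $C_j$, $j\neq k$. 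Then $\gamma_u\cup\gamma_w$ contains an arc joining $C_k$ to $C_j$. Let $C'$ be the closed curve obtained by thickening $C_k\cup(\text{that arc})\cup C_j$, i.e.\ the boundary of a thin neighbourhood of $C_j\cup C_k\cup\gamma_u\cup\gamma_w$.

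Now delete $u$ and $w$ and consider $G'=G-N[\{u,w\}]$. Every curve of $G'$ avoids $\gamma_u\cup\gamma_w$, so it crosses the thin neighbourhood only near $C_j$ or $C_k$. We can reroute each curve grounded on $C_j$ or $C_k$ so that it is grounded on the merged closed curve $C'$: extend it within the thin neighbourhood along $C_j$ or $C_k$ until it meets $C'$, choosing the neighbourhood thin enough that no new intersections are created. This exhibits $G'$ as a $(k-1)$-outerstring graph, with $C'$ replacing $C_j,C_k$. By the remark that circles may be replaced by closed curves, this is legitimate. Hence by induction $\atw(G')<16d(6d-5)+2(d-1)(k-2)$.

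Finally, add $X=N[u]\cup N[w]$ to every bag of an optimal tree decomposition of $G-X$. Each vertex of $X$ is then in all bags, so the decomposition covers $G$, and its independence number increases by at most $\alpha(N[u])+\alpha(N[w])$. Since $N[u]\setminus\{u\}$ is $K_{1,d}$-freeness-limited, $\alpha(N(u))\le d-1$, and $\alpha(N[u])\le d-1$ (the value is $1$ if $N(u)=\emptyset$), this gives the claimed bound $16d(6d-5)+2(d-1)(k-1)$.

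The main obstacle is the rerouting step: we must check rigorously that after removing $N[\{u,w\}]$, the surviving curves can be regrounded on the single closed curve $C'$, and that no remaining circle is enclosed in a way that violates the ``no circle contains another'' condition. If the curves grounded on different circles never meet, that case must be handled separately. The cleanest route there is to observe that the graph decomposes so that the circles separate into groups, or to take a disjoint union of decompositions. Alternatively, we would take any curve grounded on $C_k$ and use it together with a connecting arc in the complement of all curves, costing only one neighbourhood.
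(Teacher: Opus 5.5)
Your proposal is correct and follows the paper's proof essentially step for step. The paper also inducts on $k$, and uses $\atw(G)=\max\{\atw(G[V_k]),\atw(G-V_k)\}$ when no curve grounded on $C_k$ meets a curve grounded elsewhere (with $V_k$ the vertices grounded on $C_k$); otherwise it merges $C_k$ with another circle along an arc $\eta\subseteq\gamma_u\cup\gamma_w$ after deleting $N[u]\cup N[w]$, at an extra cost of at most $2(d-1)$.

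The obstacle you flag disappears if you thicken only the arc $\eta$, not all of $\gamma_u\cup\gamma_w$ as your ``i.e.'' suggests, because repeated crossings of $\gamma_u$ and $\gamma_w$ can enclose other circles and disconnect the boundary. The paper takes $C'$ to be $C_k$ and $C_j$ joined by two curves running parallel to $\eta$ on either side. Then $C'$ is a single closed curve enclosing no other circle, and the surviving curves, which avoid $\eta$, stay grounded on $C'$ with no rerouting.
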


\begin{proof}
    We use induction on $k$.
    The base case when $k=1$ follows from \Cref{thm:outerstring}, so we may assume that $k\geq 2$ and our assertion holds for $k-1$.

    Fix an intersection model of $G$ where $C_1,C_2,\ldots,C_k$ are disjoint closed curves on a plane.
    Let $V_i\subseteq V(G)$ be the set of vertices whose grounded points are on $C_i$.
    If $V_k$ is anticomplete to $V(G)\setminus V_k$, then $G-V_k$ is a $(k-1)$-outerstring graph and $G[V_k]$ is $1$-outerstring graph.
    Thus we have $\alpha\text{-}\mathsf{tw}(G)=\max\{\alpha\text{-}\mathsf{tw}(G[V_k]),\alpha\text{-}\mathsf{tw}(G\setminus V_k)\}\leq 16d(6d-5)+2(d-1)(k-2)$.

    Now without loss of generality, suppose that there is an edge between $V_k$ and $V_{k-1}$.
    Then we can find vertices $u\in V_k$ and $v\in V_{k-1}$, whose grounded curves are $\gamma_u$ and $\gamma_v$ respectively, such that there is a curve $\eta$ with one endpoint on $C_k$ and the other on $C_{k-1}$, and $\eta\subseteq \gamma_u\cup \gamma_v$ (see \cref{fig:k-outerstring_graph}).
    We may assume that $\eta$ does not cross itself.
    Notice that the grounded curve corresponding to a vertex $w\in V(G)$ intersects with $\eta$ if and only if $w$ is adjacent to $u$ or $v$.
    
    We can construct a new closed curve $C'$ that encloses $C_k$, $C_{k-1}$, and $\eta$ by adding two curves between $C_k$ and $C_{k-1}$ slightly offset from $\eta$ on either side (see \Cref{fig:k-outerstring_graph}).
    Then $G-(N[u]\cup N[v])$ is a $(k-1)$-outerstring graph, with the closed curves $C_1,C_2,\ldots,C_{k-2},C'$.
    Therefore, we have 
    $$\alpha\text{-}\mathsf{tw}(G)\leq \alpha(N[u]\cup N[v])+\alpha\text{-}\mathsf{tw}(G-(N[u]\cup N[v]))\leq 16d(6d-5)+2(d-1)(k-1)$$
    and the proof is complete.
\end{proof}

\begin{figure}
    \centering
    \includegraphics[width=0.95\linewidth]{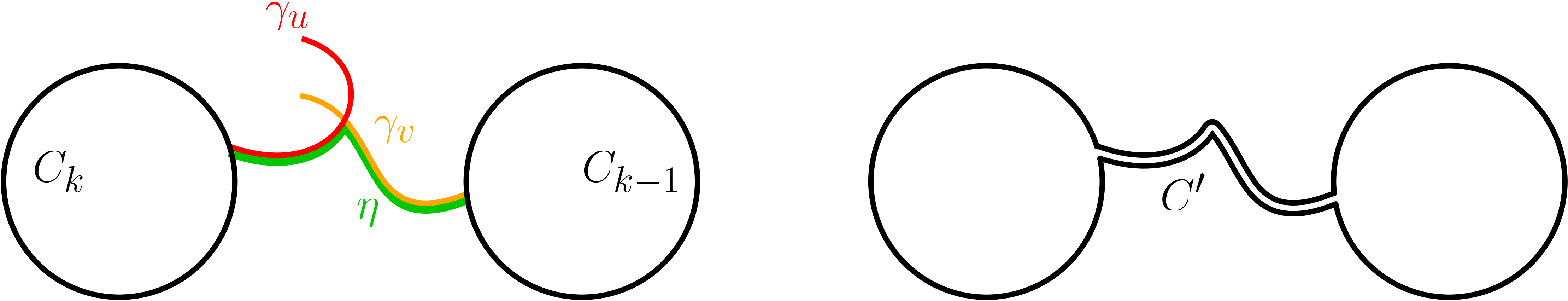}
    \caption{In the proof of \Cref{thm:k-outerstring} we can merge two closed curve using a curve between them.}
    \label{fig:k-outerstring_graph}
\end{figure}

Note that a grid is bipartite and has a tree decomposition witnessing the optimal treewidth such that each bag induces a tree.
Hence the same tree decomposition witnesses the optimal tree-independence number of a grid.

\begin{observation}\label{obs:gridtreealpha}
    For every integer $n\ge 2$, the $n \times n$-grid has tree-independence number $\lceil\frac{n+1}{2}\rceil$. 
\end{observation}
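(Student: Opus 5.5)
We need to prove that the $n\times n$-grid $G_n$ has $\atw(G_n)=\lceil\frac{n+1}{2}\rceil$. The plan is to prove the upper bound by an explicit path decomposition and the lower bound from the known fact that $\tw(G_n)=n$. For the lower bound we use a balanced-separator argument, since an independence bound alone does not control bag size.

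\emph{Upper bound.} Take the standard path decomposition of $G_n$ sweeping in row-major order. Label vertices $(i,j)$ with $1\le i,j\le n$ and order them lexicographically as $v_1,\dots,v_{n^2}$. For $t=1,\dots,n^2-n$ let the bag be $B_t=\{v_t,\dots,v_{t+n}\}$, i.e.\ $n+1$ consecutive vertices. Every edge is covered: a horizontal edge joins consecutive vertices, and a vertical edge joins $v_s$ to $v_{s+n}$. Each vertex occupies a contiguous interval of bags. A bag consists of a suffix $(i,j),\dots,(i,n)$ of row $i$ followed by a prefix $(i+1,1),\dots,(i+1,j)$ of row $i+1$. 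The induced subgraph is a tree: it is two paths joined by the single vertical edge $(i,j)(i+1,j)$, with no other vertical edges because the column ranges overlap only at $j$. So each bag induces a tree, in fact a path (a "zigzag"), on $n+1$ vertices. A path on $n+1$ vertices has independence number $\lceil\frac{n+1}{2}\rceil$, giving $\atw(G_n)\le\lceil\frac{n+1}{2}\rceil$.

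\emph{Lower bound.} Suppose $(T,\{X_t\})$ is a tree decomposition of $G_n$ with $\alpha(G_n[X_t])\le k$ for every bag. We want to show $k\ge\lceil\frac{n+1}{2}\rceil$. The route we will take is via separators. A tree decomposition always has a bag $X_t$ such that every component of $G_n-X_t$ has at most half the vertices. For the grid, such a balanced separator must contain a row- or column-transversal structure. Concretely, either every row meets $X_t$ or every column meets $X_t$; otherwise some row and some column avoid $X_t$, their union is a connected cross, and all vertices of rows and columns not fully blocked would attach to it. One then refines this to find an independent set of size about $\lceil\frac{n+1}{2}\rceil$ inside $X_t$. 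Suppose, say, $X_t$ meets all $n$ rows. Pick one vertex per row. In bipartite $G_n$, the chosen vertices from rows of the same parity are pairwise nonadjacent, since vertical adjacency needs consecutive rows. This yields only $\lceil n/2\rceil$, which falls short by one when $n$ is odd.

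The main obstacle is getting the exact constant $\lceil\frac{n+1}{2}\rceil$ rather than $\lceil n/2\rceil$, i.e.\ the odd case. We would first try to strengthen the separator lemma to the known bramble bound $\tw(G_n)=n$, which gives a bag with at least $n+1$ vertices that meets every cross (a row together with a column) of the grid. The aim is to show that a set $X$ meeting all crosses contains an independent set of size $\lceil\frac{n+1}{2}\rceil$. Since $G_n[X]$ is bipartite, $\alpha\ge |X|/2\ge\frac{n+1}{2}$, which gives exactly the bound. So the lower bound reduces to the existence of a bag of size $\ge n+1$, which holds because $\tw(G_n)=n$. This uses only $\alpha(H)\ge|V(H)|/2$ for bipartite $H$, and avoids the parity issue entirely.
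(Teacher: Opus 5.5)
Your proof is correct and follows the paper's reasoning. The upper bound comes from a width-$n$ decomposition whose bags induce paths on $n+1$ vertices; your explicit check that they are paths, not merely trees, is what pins the value at $\lceil\frac{n+1}{2}\rceil$. The lower bound comes from $\mathsf{tw}(G_n)=n$ together with bipartiteness of the large bag, so the separator/cross detour in your lower-bound paragraph is unnecessary and can simply be dropped.
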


\cref{thm:k-outerstring} also shows that for each $k$, a sufficiently large grid is not a $k$-outerstring graph.

\begin{corollary}
    For each $k\geq 1$, $(16k+3983)\times (16k+3983)$-grid and its line graph are not $k$-outerstring graphs.
\end{corollary}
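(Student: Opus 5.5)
Both the grid and its line graph have small maximum degree, so they are $K_{1,d}$-free for a small $d$. If either were a $k$-outerstring graph, \cref{thm:k-outerstring} would give an upper bound on its tree-independence number. I will compare that bound with a lower bound coming from the grid structure.

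The $n\times n$-grid has maximum degree $4$, so it is $K_{1,5}$-free. Its line graph is $K_{1,3}$-free, like every line graph, and hence also $K_{1,5}$-free. With $d=5$, \cref{thm:k-outerstring} says that any $K_{1,5}$-free $k$-outerstring graph has tree-independence number less than $16\cdot 5\cdot(6\cdot 5-5)+2\cdot 4\cdot(k-1)=2000+8(k-1)=8k+1992$.

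For the grid, set $n=16k+3983$. By \cref{obs:gridtreealpha} its tree-independence number is $\lceil (n+1)/2\rceil = 8k+1992$. This is not less than $8k+1992$, so the grid is not $k$-outerstring. Note that the numbers match exactly, which confirms that $d=5$ and this use of the observation are the intended choices.

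For the line graph $L$ of the grid, I need $\atw(L)\ge 8k+1992$. I would argue that $\atw(L)\ge\atw(\text{grid})$, and then the same comparison finishes the proof. One route is through induced minors, using that tree-independence number is monotone under induced minors (a known fact, though not stated in the paper). Subdivide every edge of the $n\times n$-grid $k'$ times. Since $\alpha$ is the independence number, that monotonicity gives the subdivided grid tree-independence number at least that of the grid. The line graph of a subdivided wall or grid contains the grid as an induced minor: contract each degree-$4$ vertex's clique of four edges together with half of each incident edge-path. A cleaner route is to take a tree decomposition of $L$ and replace each edge by both of its endpoints. This yields a tree decomposition of the grid, because for each grid vertex the bags containing its incident edges form a connected subtree, since the four incident edges are pairwise adjacent in $L$. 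An independent set of $L$ in a bag is a matching. From a bag's vertex set one can extract an independent set of the grid of at least comparable size, up to a factor depending on degree. This loses constants, so I would prefer the direct induced-minor argument.

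The main obstacle is exactly this line-graph step: showing that the line graph of the $n\times n$-grid contains the $n\times n$-grid (or a large enough grid) as an induced minor without losing size. I would first attempt the explicit contraction: map each grid vertex $v$ to a connected set of edges at $v$, chosen so that the branch sets are pairwise disjoint and adjacent exactly along grid edges. If that fails, I would fall back on citing that induced minors preserve the tree-independence lower bound and accept the stated constant as requiring this argument.
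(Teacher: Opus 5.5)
Your argument for the grid itself is correct and is exactly the paper's: with $d=5$, \cref{obs:gridtreealpha} gives $\atw=8k+1992$, which is not below the bound $2000+8(k-1)$ of \cref{thm:k-outerstring}. The gap is the line graph $L$ of the $n\times n$-grid, $n=16k+3983$. The paper only says this case ``follows similarly''. Your proposal does not establish any usable lower bound on $\atw(L)$, and the step you single out cannot be done: $L$ does not contain the $n\times n$-grid as an induced minor for any $n\ge 3$. To see this, fix a grid vertex $w$ and look at an induced minor model inside $L$. The branch sets containing an edge incident to $w$ are pairwise adjacent, so there are at most two of them, since the grid is triangle-free. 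Hence each edge of the target grid is realised at its own vertex $w$, which forces $2n(n-1)\le n^2$. The same count shows that any $m\times m$-grid induced minor of $L$ satisfies $2m(m-1)\le n^2$. So a grid induced minor certifies at most roughly $n/(2\sqrt{2})$, never $\lceil (n+1)/2\rceil$.

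Your subdivision idea, made precise, gives $m=(n+1)/2$. The edges in odd-indexed rows and columns form a once-subdivided $m\times m$-grid, whose line graph is an induced subgraph of $L$, and contracting the cliques at branch vertices yields the $m\times m$-grid. That gives only $\atw(L)\ge 4k+997$, well below $8k+1992$. Your ``cleaner route'' loses a factor of up to $4$: the four edges at $v$ form a bag of independence number $1$ whose endpoints span $N[v]$, with $\alpha=4$. So it gives only about $n/8$.

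There are two easy repairs.
\begin{enumerate}
\item Use that $L$ is $K_{1,3}$-free and take $d=3$. Then \cref{thm:k-outerstring} gives $\atw(L)<624+4(k-1)=4k+620$, while the half-size subdivided grid above gives $\atw(L)\ge 4k+997$, a contradiction.
\item Keep $d=5$ and bound $\atw(L)$ directly. A tree decomposition of $L$ of independence number $t$ has a bag $S$ such that every component of $L-S$ has at most $n(n-1)$ vertices. Here $\alpha(L[S])$ is the matching number of the edge set $S$, and the grid is bipartite, so König's theorem covers $S$ by a set $C$ of at most $t$ grid vertices. If $t<n/2$, more than $n/2$ rows and more than $n/2$ columns avoid $C$. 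Their edges are not in $S$, and they lie in one component of $L-S$ with at least $2(n-t)(n-1)>n(n-1)$ vertices, a contradiction. Hence $\atw(L)\ge\lceil n/2\rceil=8k+1992$, since $n$ is odd, which matches the bound exactly.
\end{enumerate}
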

\begin{proof}
    Let $G=(16k+3983)\times (16k+3983)$-grid.
    Then $\atw(G)=2000+8(k-1)$, and from \cref{thm:k-outerstring} when $d=5$, we conclude that~$G$ is not $k$-outerstring graph.
    The case for the line graph of~$G$ follows similarly.
\end{proof}

\section{Graph classes with unbounded tree-independence number}
\label{sec:unbounded}

In this section we discuss a couple of $K_{1,d}$-free graph classes that have unbounded tree-independence number. Firstly, we note that by \cref{thm:hajebi-spirkl}, the only interesting hereditary classes to consider are those that have an infinite number of forbidden induced subgraphs. Also observe that grids and walls have unbounded tree-independence number~\cite{DBLP:journals/jctb/DallardMS24} (see \cref{obs:gridtreealpha}).
Moreover, grids are $K_{1,5}$-free and walls are $K_{1,4}$-free. Grids and walls are contained in many significant sparse graph classes, such as the classes of planar graphs. 

However, they are also contained in some denser graph classes. For instance, like we show in \cref{sec:circle-containment}, the class of circle containment graphs contains all grids.  Observe that a natural subclass of circle containment graphs, the class of interval containment graphs, in fact has bounded tree-independence number when forbidding $K_{1,d}$. Namely, this is exactly the class of permutation graphs, a subclass of the class of outerstring graphs, and so the result follows from \cref{thm:outerstring}.

Moreover, since subdividing edges does not decrease the tree-independence number (since having tree-independence number at most $k$ is closed under taking induced minors~\cite{DBLP:journals/jctb/DallardMS24}), classes containing subdivisions of arbitrarily large grids or walls also have unbounded tree-independence number. For instance, this gives an easy proof that the class of $\{C_4, K_{1,5}\}$-free graphs has unbounded tree-independence number. We will use the induced minor relation to show that the class of rook graphs has unbounded tree-independence number in \cref{sec:rook}.

\subsection{Circle containment graphs}
\label{sec:circle-containment}
A \emph{circle containment graph} is a graph whose vertex set corresponds to circles (disks) in 2 dimensional space and two vertices are adjacent if and only if one of the corresponding circles is contained in the other one.

\begin{lemma}
    The class of circle containment graphs contains all grids.
\end{lemma}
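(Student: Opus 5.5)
We prove the lemma by an explicit geometric representation built from the bipartition of the grid. Since circle containment graphs are closed under taking induced subgraphs (delete disks), and every grid is an induced subgraph of a larger square grid, it suffices to represent the $n\times n$-grid for every $n$. Index its vertices by pairs $(i,j)\in\{1,\dots,n\}^2$, with $(i,j)$ adjacent to $(i',j')$ exactly when $|i-i'|+|j-j'|=1$. Call $(i,j)$ \emph{black} if $i+j$ is even and \emph{white} otherwise. Every edge joins a black vertex to a white vertex.

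Fix constants $0<\delta<0.05$ and $r=1.1$. The construction is:
\begin{itemize}
\item each black vertex $(i,j)$ gets the disk of radius $\delta$ centred at the point $(i,j)\in\mathbb{R}^2$;
\item each white vertex $(i,j)$ gets the disk of radius $r$ centred at $(i,j)$.
\end{itemize}

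We then check the three kinds of pairs.

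\emph{Two black vertices.} Their centres are at distance at least $\sqrt2$, so the tiny disks are disjoint. Hence neither contains the other, matching the fact that the grid has no black–black edges.

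\emph{Two white vertices.} Their disks have the same radius and distinct centres, so neither contains the other. Again this matches the absence of white–white edges.

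\emph{A black vertex $b$ and a white vertex $w$.} A disk of radius $\delta$ can never contain one of radius $r>\delta$. So the only possible containment is the black disk lying inside the white disk, which happens exactly when $\lVert b-w\rVert+\delta\le r$. Because $b$ and $w$ have opposite parity, their coordinate difference has odd $\ell_1$-norm, so $\lVert b-w\rVert^2$ is odd. Thus either $\lVert b-w\rVert=1$, which happens precisely when $bw$ is a grid edge, or $\lVert b-w\rVert\ge\sqrt5$. In the first case $1+\delta<1.1=r$, so containment holds. In the second case $\sqrt5+\delta>r$, so containment fails.

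Therefore the containment graph of these disks is exactly the $n\times n$-grid. The only step needing care is this parity observation, which rules out black points at distance strictly between $1$ and $\sqrt5$ from a white centre; once it is in place the rest is routine verification.
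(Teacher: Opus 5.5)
Your proof is correct and uses essentially the same construction as the paper: tiny disks centred at the grid points of one parity class, and disks of radius slightly above $1$ at the other. The paper takes radii $\varepsilon$ and $1+2\varepsilon$ and works with $m\times n$ grids directly, whereas you reduce to square grids. Your parity argument that non-adjacent opposite-colour pairs lie at distance at least $\sqrt5$ (which implicitly uses that $3$ is not a sum of two squares) supplies the verification the paper leaves implicit.
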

\begin{proof}
     Consider an $m \times n$ grid $G_{m \times n}$ with $V(G_{m \times n}) := \{(a,b) \, : \, a \in [m], \, b\in [n]\}$ such that $(a_1,b_1)(a_2,b_2)\in E(G_{m \times n})$ if and only if $|a_1 - a_2| + |b_1 - b_2| = 1$. Then, for $\varepsilon$ a fixed small constant, the following is a circle containment representation of $G_{m \times n}$. If for a vertex $(a,b) \in V(G_{m\times n})$ the sum $a +b$ is odd, we represent it by a circle of radius $1 + 2\varepsilon$ with centre point $(a,b)$. Otherwise, if $a + b$ is even, we represent $(a,b)$ by a circle with radius $\varepsilon$ and centre point $(a,b)$. 
\end{proof}

\begin{figure}
    \centering
    \includegraphics[width=0.95\linewidth]{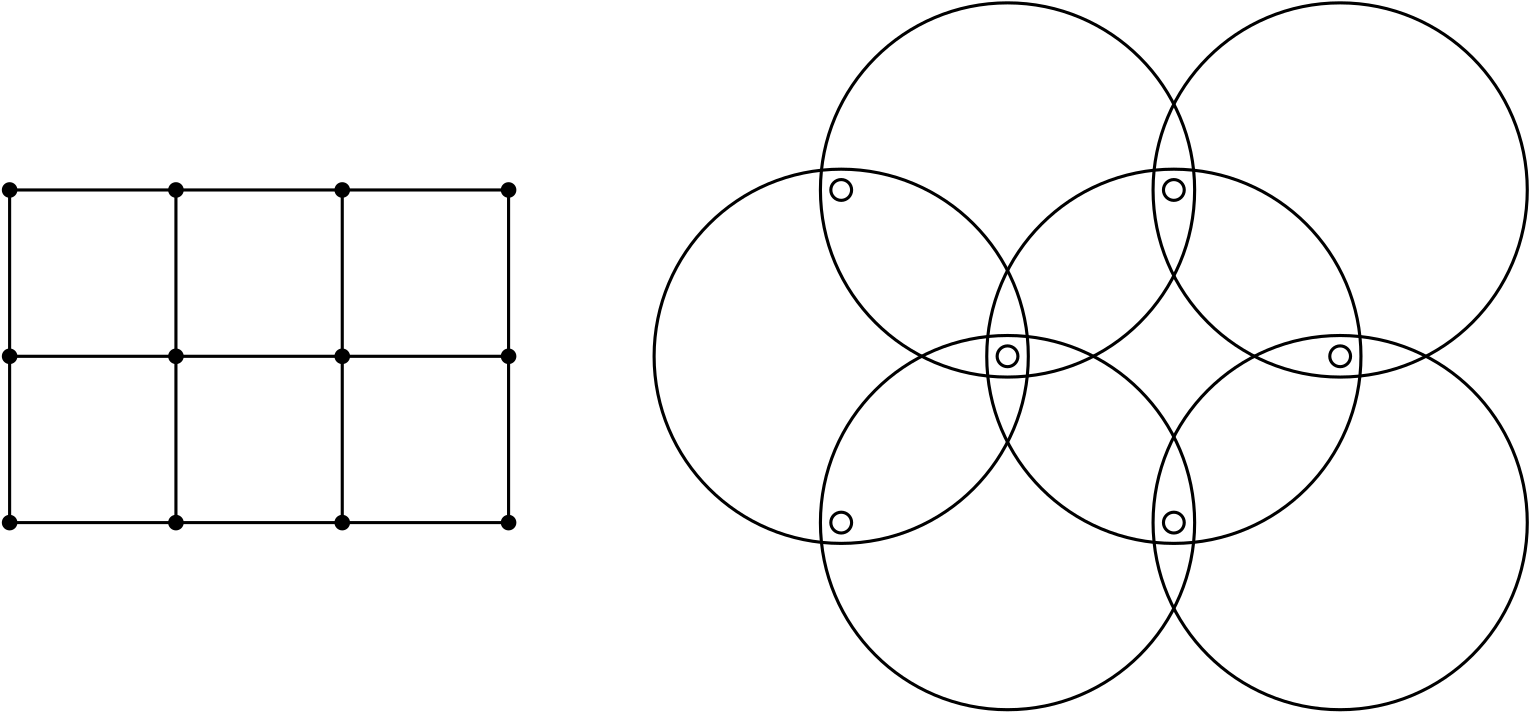}
    \caption{$G_{3 \times 4}$ and its corresponding circle containment representation.}
    \label{fig:circle_containment_grids}
\end{figure}

See \cref{fig:circle_containment_grids} for an illustration of the construction of the proof above. Given that grids are $K_{1,5}$-free and have unbounded tree-independence number~\cite{DBLP:journals/jctb/DallardMS24}, we obtain the following corollary.

\begin{corollary}
    The class of $K_{1,5}$-free circle containment graphs has unbounded tree-independence number.
\end{corollary}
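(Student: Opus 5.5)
The corollary follows directly from the lemma just proved together with \cref{obs:gridtreealpha}. The plan is to exhibit, for every integer $n\ge 2$, a circle containment graph that is $K_{1,5}$-free and has tree-independence number at least $\lceil\frac{n+1}{2}\rceil$. Letting $n$ grow then shows the tree-independence number is unbounded on the class.

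First, by the preceding lemma, for every $n$ the $n\times n$-grid $G_{n\times n}$ is a circle containment graph. Second, I would check that $G_{n\times n}$ is $K_{1,5}$-free. Every vertex $(a,b)$ has at most four neighbours, namely $(a\pm1,b)$ and $(a,b\pm1)$. An induced $K_{1,5}$ needs a centre of degree at least $5$, so none exists. (In fact grids are bipartite, so the neighbourhood of any vertex is independent, but the degree bound alone already suffices.) Third, by \cref{obs:gridtreealpha}, $\atw(G_{n\times n})=\lceil\frac{n+1}{2}\rceil$, which tends to infinity with $n$. Alternatively one can cite the unboundedness result of~\cite{DBLP:journals/jctb/DallardMS24}.

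Combining these, the family $\{G_{n\times n}\}_{n\ge2}$ lies in the class of $K_{1,5}$-free circle containment graphs and has unbounded tree-independence number. There is no real obstacle here. The only point needing care is the one the lemma already handles: the containment representation must realise exactly the grid's edges. Disks of like parity must be non-nested, and each small disk must lie inside exactly its four neighbouring large disks. The distance from $(a,b)$ to a diagonal large-disk centre is $\sqrt2>1+2\varepsilon$ once $\varepsilon$ is small, so no extra containments arise. Since that is part of the lemma, the corollary needs nothing beyond citing it.
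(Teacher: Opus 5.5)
Your proposal is correct and follows the paper's argument: the preceding lemma puts every grid in the class, grids have maximum degree $4$ and so are $K_{1,5}$-free, and grids have unbounded tree-independence number (the paper cites Dallard, Milani\v{c}, and \v{S}torgel for this; the grid observation in Section~\ref{sec:outerstring} works equally well). One harmless slip in your aside about the lemma: a small-disk centre $(a,b)$ has no diagonal large-disk centres, because $(a\pm1,b\pm1)$ has the same parity and so carries a small disk; what actually rules out extra containments is that the nearest non-adjacent large-disk centres are at distance $\sqrt{5}>1+\varepsilon$.
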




\subsection{Rook graphs}
\label{sec:rook}

For a final example, we consider the class of rook graphs. The $n\times m$ \emph{rook graph} is a graph $R_{n,m}$ with vertex set $V(R_{n,m}):= \{v_{i,j}\, : \, i \in [n], \, j\in [m]\}$ and two vertices $v_{i,j}$ and $v_{i',j'}$ are adjacent if and only if $i=i'$ or $j=j'$. Intuitively, it is the graph given by legal moves for rooks on a $n \times m$ chessboard. As such, they are significantly denser supergraphs of grids. Note that rook graphs are $K_{1,3}$-free.

\begin{lemma}\label{lem:rook_graphs_induced_minors}
    For any graph $H$ there exists an $n_H \in \mathbb{N}^+$ such that $R_{n_H,n_H}$ contains $H$ as an induced minor.
\end{lemma}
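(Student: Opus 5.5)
Given a graph $H$ with vertex set $\{h_1,\dots,h_k\}$, I will build an induced minor model of $H$ inside $R_{n,n}$ for a suitable $n$ depending only on $H$, by contracting connected sets into branch vertices and deleting all other vertices. The useful fact is that in $R_{n,n}$ the vertices of one row form a clique, as do the vertices of one column, and two vertices in different rows and different columns are never adjacent. So if we keep only a sparse set of cells, adjacency between kept vertices can be controlled completely by choosing rows and columns.

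\emph{Construction.} Let $m=|E(H)|$ and take $n=k+m$ (the exact count can be tuned). Row $i\le k$ and column $i\le k$ are reserved for $h_i$. Enumerate the edges $e_1,\dots,e_m$ of $H$ and reserve row $k+t$ and column $k+t$ for $e_t$. The branch set of $h_i$ is
\[
B_i=\{v_{i,i}\}\cup\{v_{i,k+t}: h_i\in e_t\}\cup\{v_{k+t,i}: h_i\in e_t\}.
\]
Its vertices all lie in row $i$ or column $i$, and $v_{i,i}$ belongs to both, so $G[B_i]$ is connected with diameter at most $2$. For an edge $e_t=h_ih_j$ with $i<j$, I add one more vertex of $B_i$ or $B_j$ so that the two branch sets touch. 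One option is to add $v_{i,k+t}\in B_i$ and $v_{k+t,j}\in B_j$ and then check whether they are adjacent; cleaner is to put $v_{i,k+t}$ into $B_i$ and $v_{j,k+t}$ into $B_j$. These two vertices lie in the same column $k+t$, so they are adjacent. I will therefore simplify the definition to $B_i=\{v_{i,i}\}\cup\{v_{i,k+t}: h_i\in e_t\}$. This set lies entirely in row $i$, so it is a clique and in particular connected. All vertices outside $\bigcup_i B_i$ are deleted.

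\emph{Verification.} I need exactly one adjacency between $B_i$ and $B_j$ when $h_ih_j\in E(H)$, and none otherwise. Because the row indices $i\ne j$ differ, an edge between $B_i$ and $B_j$ must come from a shared column. The columns used by $B_i$ are $i$ itself together with the columns $k+t$ for edges $e_t$ incident to $h_i$. Column $i$ is used only by $B_i$ (as $i\neq j$ and $i\le k<k+t$). Column $k+t$ is used only by the two endpoints of $e_t$. Hence $B_i$ and $B_j$ share a column exactly when $h_ih_j=e_t$ for some $t$. Contracting each clique $B_i$ to a single vertex then yields exactly $H$, so $n_H=k+|E(H)|$ works. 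Square boards are fine because unused rows and columns are simply deleted.

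The only real subtlety is avoiding unwanted adjacencies between branch sets. The first attempt above, which used both rows and columns, risked them. Placing each branch set inside its own row and using dedicated "edge columns" removes the problem, and the resulting check is the short argument just given.
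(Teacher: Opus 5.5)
Your final construction is correct. Each $B_i$ is a nonempty subset of row $i$, hence a clique, and the $B_i$ are pairwise disjoint. Since distinct branch sets sit in distinct rows, an edge between $B_i$ and $B_j$ must come from a shared column, and that happens exactly when $h_ih_j=e_t$ for some $t$.

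The paper's argument differs in form but not in substance. It inducts on $|V(H)|$ with a stronger invariant: every branch set is a rectangle $\{v_{i,j}: i\in I_u,\ j\in J_u\}$, and two branch sets touch iff their row sets or column sets intersect. A new vertex $v$ gets a fresh column and one fresh row per neighbour, and each such row is also added to that neighbour's row set. If you unroll this induction, every $J_u$ is a single private column, and every edge gets a private row containing one cell of each endpoint. So the paper's model is your model transposed; the only difference is that the extra private cell appears only for vertices that are isolated when inserted. Both give $n_H=|V(H)|+|E(H)|$. Your direct description turns the verification into one global check and makes the structure visible, while the induction splits the same check into local steps.

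On presentation, you can delete the detour at the start. Your first definition of $B_i$, using both row $i$ and column $i$, already works with no added vertex. A cell $v_{k+t,i}$ can meet $B_j$ only in row $k+t$, which again forces $h_i,h_j\in e_t$. So the remark about risking unwanted adjacencies is unnecessary.
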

\begin{proof}
    We will prove the following more precise statement:
    \begin{claim}
        For any graph $H$ there exists an $n_H \in \mathbb{N}^+$ such that $R_{n_H,n_H}$ contains an induced minor model $\mathbb{X}$ of $H$ where for each $u \in V(H)$ the branch set $X_u \in \mathbb{X}$ is given by $\{v_{i,j}\, :\, i \in I_u, \, j \in J_u\}$ for some subsets $I_u, J_u \subseteq [n_H]$. 
    \end{claim}
    We will proceed by induction on $|V(H)|$. For $|V(H)|=1$, the claim trivially holds with $n_H =1$. Now suppose that $|V(H)|\geq 2$. Let $v \in V(H)$ be an arbitrary vertex and let $H' := H-v$. By induction, the claim holds for $H'$. That is, there exists an $n_{H'} \in \mathbb{N}^+$ and sets $I'_u, J'_u \subseteq [n_{H'}]$ such that $R_{n_{H'},n_{H'}}$ contains an induced minor model $\mathbb{X}'$ of $H'$ where for each $u \in V(H')$ the branch set $X'_u \in \mathbb{X}'$ is given by $\{v_{i,j}\, :\, i \in I'_u, \, j \in J'_u\}$. If $v$ is an isolated vertex in $H$, it suffices to set $n_H:= n_{H'}+1$ and to consider the induced minor model $\mathbb{X}' \cup \{X_v\}$ where $X_v = \{v_{n_H, n_H}\}$. Hence, we may assume that $\deg_H(v) \geq 1$. Then let $v_1, \ldots, v_{\deg_H(v)}$ be the neighbours of $v$ in $H$. Let $n_H := n_{H'} + \deg_H(v)+1$ and consider the induced minor model $\mathbb{X} = \{X_u\}_{u\in V(H)}$ of $H$ in $R_{n_H, n_H}$ defined by sets $I_u, J_u \subseteq [n_H]$ as follows:
    \begin{itemize}
        \item $I_u := I'_u$ and $J_u := J_u'$ for $u \in V(H)\setminus N_H[v]$;
        \item $I_{v_{\ell}} := I'_{v_{\ell}} \cup \{n_{H'} +\ell\}$ for $\ell \in [\deg_H(v)]$ and $J_{v_\ell} := J_{v_\ell}'$; and 
        \item $I_v := \{n_{H'}+1, n_{H'}+2, \ldots, n_{H'}+\deg_H(v)\}$ and $J_v := \{n_H\}$.
    \end{itemize}
    Since two vertices $u,u'\in V(H)$ are adjacent according to the induced minor model if and only if $I_u \cap I_{u'} \neq \emptyset$ or $J_u \cap J_{u'}\neq \emptyset$, the correctness of this induced minor model follows from the correctness of the induced minor model $\mathbb{X}'$ of $H'$ and the observation that the extension of $\mathbb{X}'$ to $\mathbb{X}$ is exactly responsible for the edges incident to $v$ in $H$.
\end{proof}
Given that having tree-independence number at most $k$ is closed under taking induced minors~\cite{DBLP:journals/jctb/DallardMS24} and that graph classes of unbound tree-independence number exist, \cref{lem:rook_graphs_induced_minors} implies the following:
\begin{corollary}
    The class of rook graphs has unbounded tree-independence number.
\end{corollary}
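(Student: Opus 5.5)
The corollary is an immediate consequence of \cref{lem:rook_graphs_induced_minors} together with the closure of bounded tree-independence number under induced minors. The plan is a contradiction argument: suppose there is a constant $k$ with $\atw(R_{n,n})\le k$ for every $n$. We may restrict to square rook graphs $R_{n,n}$, since these already lie in the class.

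First I fix a family of graphs with unbounded tree-independence number. By \cref{obs:gridtreealpha}, the $m\times m$-grid has tree-independence number $\lceil\frac{m+1}{2}\rceil$, so I choose $m$ with $\lceil\frac{m+1}{2}\rceil>k$ and set $H$ to be the $m\times m$-grid. Any explicit family with unbounded tree-independence number would serve equally well.

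Next I apply \cref{lem:rook_graphs_induced_minors} to $H$. This gives an $n_H$ such that $R_{n_H,n_H}$ contains $H$ as an induced minor. Since the property of having tree-independence number at most $k$ is closed under taking induced minors~\cite{DBLP:journals/jctb/DallardMS24}, we get $\atw(H)\le\atw(R_{n_H,n_H})\le k$, which contradicts the choice of $m$. Hence the rook graphs have unbounded tree-independence number.

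I see no real obstacle here. The only points needing care are that the induced-minor monotonicity is applied in the correct direction (minor $H$ inside host $R_{n_H,n_H}$), and that the lemma gives a single rook graph containing the chosen $H$, not merely an infinite family.
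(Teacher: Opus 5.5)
Your proposal is correct and follows the paper's argument: the paper likewise combines \cref{lem:rook_graphs_induced_minors} with the induced-minor monotonicity of tree-independence number and the existence of graphs of unbounded tree-independence number. The only difference is that you name that family explicitly as grids via \cref{obs:gridtreealpha}.
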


\section{A step towards $K_{2,d}$-free graph classes}
\label{sec:K2d}

Finally, we now show that, in fact, when excluding all holes of length at least $5$, one can prove a slightly stronger result than the one from Theorem~\ref{thm:atw_K1d-free_longest_cycle}. We start with some definitions following terminology from \cite{bousquet2025inducedminormodelsi}.

Let $G$ and $H$ be two graphs such that $H$ is an induced minor of $G$. An \emph{induced minor model} of $H$ in $G$ is a collection $\mathbb{X} = \{X_u: u\in V(H)\}$ of subsets of $V(G)$, called \emph{branch sets} of $\mathbb{X}$, such that
\begin{itemize}
    \item for every $u,v\in V(H)$, $u\not = v$, $X_u\cap X_v = \emptyset$,
    \item for every $u\in V(H)$, the graph $G[X_u]$ is connected, and
    \item for every $u,v\in V(H)$, $u\not = v$, there exists an edge in $G$ between $X_u$ and $X_v$ if and only if $uv\in E(H)$.
\end{itemize}
The subgraph of $G$ \emph{induced by} $\mathbb{X}$ is the subgraph of $G$ induced by the union of the branch sets of $\mathbb{X}$. An induced minor model $\mathbb{X} = \{X_u: u\in V(H)\}$ of $H$ in $G$ is said to be \emph{minimal} if $\mathbb{X}$ induced a subgraph $G'$ of $G$ such that no proper induced subgraph of $G'$ contains $H$ as an induced minor.

Before we state our results we present the following structural lemma for minimal induced minor models.

\begin{lemma}[\cite{bousquet2025inducedminormodelsi}]\label{lem:induced_minor_model_properties}
    Let $G$ and $H$ be two graphs such that $H$ is an induced minor of $G$. Then, there exists a minimal induced minor model $\mathbb{X} = \{X_u: u\in V(H)\}$ of $H$ in $G$ such that the following holds.
    \begin{enumerate}
        \item $|X_u| = 1$ for every vertex $u\in V(H)$ with degree at most $2$ such that $u$ belongs to a connected component of $H$ that is not a cycle.
        \item For every connected component $C$ of $H$ that is a cycle, there exists a vertex $v$ of $C$ such that $G[X_v]$ is a path and $|X_u| = 1$ for every vertex $u$ in $V(C - v)$.
    \end{enumerate}
\end{lemma}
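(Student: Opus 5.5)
The natural approach is to start from an arbitrary induced minor model and shrink it vertex by vertex until it becomes minimal, then show that minimality forces the two structural properties. First fix any induced minor model of $H$ in $G$ and choose one, $\mathbb{X}$, whose induced subgraph $G'$ has the fewest vertices. By the choice of $\mathbb{X}$, no proper induced subgraph of $G'$ has $H$ as an induced minor, so $\mathbb{X}$ is minimal. Among all such vertex-minimum models I would then fix one that is extremal for a secondary measure, namely the lexicographically smallest vector of branch-set sizes. This extra freedom will be needed for the cycle case.

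For property~1, let $u$ have degree at most $2$ in a component of $H$ that is not a cycle, and suppose $|X_u|\ge 2$. If $u$ is isolated, replace $X_u$ by a single vertex, which strictly decreases $|V(G')|$. If $u$ has neighbours $a$ (and possibly $b$), then $G[X_u]$ is connected and contains a vertex adjacent to $X_a$ and one adjacent to $X_b$. Take a shortest path $Q$ in $G[X_u]$ between $N(X_a)\cap X_u$ and $N(X_b)\cap X_u$. Its interior has no neighbours in $X_a\cup X_b$, and it has no neighbours in the other branch sets, since $X_u$ has none. Replacing $X_u$ by $V(Q)$ gives a valid model, so minimality forces $X_u=V(Q)$.

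If $|Q|\ge 2$, I would absorb its interior and one endpoint into a neighbouring branch set. Writing $Q=q_1\dots q_m$ with $q_1$ adjacent to $X_a$, set $X_u:=\{q_m\}$ and $X_a:=X_a\cup\{q_1,\dots,q_{m-1}\}$. The new $X_a$ is connected, and it must not gain new adjacencies. The interior of $Q$ only sees $X_a$, $X_b$ and $Q$ itself. The danger is $q_{m-1}$ being adjacent to $X_b$, or $q_1$ being adjacent to $X_b$. Both are excluded by the shortest-path choice when $m\ge 3$; the case $m=2$ needs a small separate check. This operation keeps $|V(G')|$ fixed but reduces $|X_u|$. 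Absorbing into $X_a$ may enlarge $X_a$, so the secondary measure should be designed so that this move is an improvement. For example, one can minimise the number of vertices lying in branch sets of degree-$\le 2$ vertices of non-cycle components, processing along a path component towards a vertex of degree $\ge 3$ or an endpoint. In a tree component there is always a direction leading to an endpoint of degree $1$ or to a vertex of degree $\ge 3$, and either can absorb the extra vertices. Choosing this direction consistently so that the process terminates is the main obstacle. I would handle it by orienting each path of degree-$2$ vertices towards a fixed endpoint and minimising total size in that order.

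For property~2, on a cycle component $C=u_1\dots u_k$ apply the same path-shortening argument to every $X_{u_i}$, so that each is an induced path. Then repeatedly push the interior of $X_{u_i}$ into $X_{u_{i+1}}$ around the cycle. This concentrates all excess vertices into a single branch set $X_v$, which is a path because it is a union of consecutive path segments. Adjacency to non-neighbours is excluded as before. Here no tree-like endpoint exists, which is why exactly one branch set cannot be forced to size $1$.
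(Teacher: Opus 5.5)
The paper does not prove this lemma; it quotes it from Bousquet et al., so there is no in-paper argument to compare against, and your proposal has to stand on its own. The framework is right:
\begin{itemize}
\item a vertex-minimum model is minimal, and every move you make preserves the union of the branch sets, hence minimality;
\item in such a model each $X_u$ with $\deg_H(u)\le 2$ is forced to be a shortest path between its attachment sets;
\item your push is a valid move. The case $m=2$ needs no separate check, since a vertex of $Q$ other than $q_m$ with a neighbour in $X_b$ would give a shorter path.
\end{itemize}
One step is false as stated, however. A vertex of degree $1$ cannot \emph{absorb} excess vertices, because property~1 also requires $|X_\ell|=1$ for every leaf $\ell$. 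A path component has no vertex of degree at least $3$, so your procedure as written would end with a non-singleton leaf branch set there.

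The repair uses vertex-minimality. After pushing excess into $X_\ell$, you can shrink $X_\ell$ to a single vertex adjacent to the branch set of $\ell$'s neighbour, which strictly decreases $|V(G')|$. Hence, in a vertex-minimum model, path components and threads ending at a leaf carry no excess. (A thread is a maximal path of degree-$2$ vertices, in any non-cycle component, not only in trees.) Every other thread should be oriented toward an end of degree at least $3$. With that orientation, your lexicographic measure decreases under every push, which settles termination: order the coordinates from the far end of each thread toward its degree-$\ge 3$ end, and along $u_1,\dots,u_k$ on a cycle.

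You can also avoid this bookkeeping entirely. The same observation fills the gap in your cycle paragraph, where being ``a union of consecutive path segments'' is not by itself a reason to induce a path. If $\deg_H(u)=2$ and $X_u=q_1\dots q_m$ is a shortest path between its attachment sets, then $q_1$ is the only vertex of $X_u$ with a neighbour in the previous branch set, and $q_m$ is the only one with a neighbour in the next. Consequently:
\begin{itemize}
\item the branch sets of two consecutive degree-$2$ vertices are joined by exactly one edge;
\item the union of the branch sets along a thread $a\,u_1\dots u_t\,b$ is an induced path $r_1\dots r_s$ of $G$, with only $r_1$ attached to $X_a$;
\item the union for a cycle component of length $k$ is an induced cycle $c_1\dots c_L$ of $G$ with $L\ge k$.
\end{itemize}
Relabelling then gives both properties in one step. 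On a thread whose end $a$ has degree at least $3$, set $X_{u_i}=\{r_{s-t+i}\}$ and add $r_1,\dots,r_{s-t}$ to $X_a$. On a cycle component, take $\{c_1\},\dots,\{c_{k-1}\}$ and $X_v=\{c_k,\dots,c_L\}$. No other branch set changes, so no potential function is needed.
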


Using Lemma~\ref{lem:induced_minor_model_properties}, we can now prove the following.

\begin{theorem}
    \label{thm:no_long_hole_Kpq}
    Let $G$ be a graph with no holes of length at least $5$. Let $p,q\ge 2$ be integers. Then $G$ contains $K_{p,q}$ as an induced minor if and only if $G$ contains $K_{p,q}$ as an induced subgraph.
\end{theorem}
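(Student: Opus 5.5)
The plan is to take an induced minor model of $K_{p,q}$ in $G$ that is minimal with respect to the total size of its branch sets, and to show that every branch set is then a single vertex. That model is exactly an induced copy of $K_{p,q}$, which gives the nontrivial direction; the converse direction is trivial. Write the two sides of $K_{p,q}$ as $A$ and $B$ with $|A|=p$ and $|B|=q$, and let $\{X_v\}$ be the model. I will not use \cref{lem:induced_minor_model_properties}, since its singleton conclusion only covers vertices of degree at most $2$, which does not cover the general case.

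Suppose some branch set, say $X_u$ with $u\in A$, has $|X_u|\ge 2$. By symmetry the case $u\in B$ is identical, using $p\ge 2$ where the argument below uses $q\ge 2$.
\begin{enumerate}
\item The connected graph $G[X_u]$ has at least two non-cut vertices $\ell_1,\ell_2$, for instance two leaves of a spanning tree.
\item By minimality, removing $\ell_i$ from $X_u$ must destroy the model. Since $X_u-\ell_i$ stays connected, the only way this can happen is that $\ell_i$ is the unique vertex of $X_u$ with a neighbour in $X_{b_i}$, for some $b_i\in B$.
\item Then $b_1\ne b_2$: otherwise $\ell_1=\ell_2$ would be the unique such vertex for both. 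Hence no single vertex of $X_u$ has neighbours in both $X_{b_1}$ and $X_{b_2}$.
\end{enumerate}

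I will turn this into a hole of length at least $5$. Let $P$ be a shortest path in $G[X_u]$ from a vertex with a neighbour in $X_{b_1}$ to a vertex with a neighbour in $X_{b_2}$. By the previous step $P$ has at least two vertices, and by the choice of $P$ only its first vertex sees $X_{b_1}$ and only its last vertex sees $X_{b_2}$. Since $p\ge 2$ there is a vertex $u'\in A\setminus\{u\}$. Let $Q$ be a shortest path between the endpoints of $P$ whose interior lies in $X_{b_1}\cup X_{u'}\cup X_{b_2}$; such a path exists because $b_1u'$ and $u'b_2$ are edges of $K_{p,q}$ and each branch set is connected. The interior of $Q$ must meet each of $X_{b_1}$, $X_{u'}$ and $X_{b_2}$, because $X_{b_1}$ is anticomplete to $X_{b_2}$ and $X_{u'}$ is anticomplete to $X_u$. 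So $Q$ has at least three interior vertices.

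The cycle $P\cup Q$ therefore has length at least $2+3=5$, and it remains to check it has no chord. Chords inside $P$ or inside $Q$ are excluded by the shortest-path choices. Chords from $P$ to $X_{u'}$ are excluded since $X_u$ and $X_{u'}$ are anticomplete. Chords from an internal vertex of $P$, or from the wrong endpoint, to $X_{b_1}$ or $X_{b_2}$ are excluded by the choice of $P$. So $P\cup Q$ is a hole of length at least $5$, a contradiction, and hence all branch sets are singletons.

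The main obstacle is the chordlessness bookkeeping. In particular, $Q$ should be chosen as a shortest path in $G[\{x,y\}\cup X_{b_1}\cup X_{u'}\cup X_{b_2}]$, where $x,y$ are the endpoints of $P$, so that $x$ and $y$ have no extra neighbours on $Q$; and the ordering of the vertices along $Q$ should be checked so that its length count is correct.
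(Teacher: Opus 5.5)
Your proof is correct apart from one small repair noted below, and its overall structure is the paper's. Both take a minimal model and find, inside a non-singleton branch set, an induced path whose two ends are the only vertices of the path with neighbours in two distinct bags of the other side. Both then close that path into a hole of length at least $5$ through those two bags and a second bag on the same side.

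The difference is in how that path is obtained. The paper chooses $u\in A_1$ maximising the number $\phi(u)$ of bags of $\mathcal{B}$ it touches. It uses minimality only once, to see that $u$ misses some bag $B_q$ (otherwise $A_1$ could be replaced by $\{u\}$). It then takes a nearest vertex $v$ touching $B_q$, uses maximality of $\phi(u)$ to get a bag $B_j$ touched by $u$ but not by $v$, and finally takes the vertex $w$ of $P$ nearest to $v$ touching $B_j$. You instead apply minimality vertex by vertex: every non-cut vertex of $G[X_u]$ is the unique vertex of $X_u$ touching some bag, so two leaves of a spanning tree immediately give two distinct private bags. Your route skips the extremal choice and the two nearest-vertex selections, and it records a cleaner structural fact about minimal models. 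The paper's route needs only the weaker property that a non-singleton branch set cannot be shrunk to one of its vertices.

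The repair concerns the definition of $Q$. If $P$ has only two vertices $x,y$, then the edge $xy$ is itself a path between the ends of $P$ whose (empty) interior lies in $X_{b_1}\cup X_{u'}\cup X_{b_2}$. So it is the shortest such path, and your claim that the interior of $Q$ meets all three bags fails. Taking a shortest path in $G[\{x,y\}\cup X_{b_1}\cup X_{u'}\cup X_{b_2}]$, as you propose at the end, has the same defect. Require the interior of $Q$ to be nonempty (equivalently, delete the edge $xy$ first). Then a chord inside $Q$, or from $x$ or $y$ to a non-consecutive vertex of $Q$, would give a shorter admissible path, and $xy$ is an edge of the cycle rather than a chord.

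The length count then works as you intend. The first interior vertex of $Q$ lies in $X_{b_1}$ and the last in $X_{b_2}$, because $x$ has no neighbour in $X_{u'}\cup X_{b_2}$ and $y$ has none in $X_{u'}\cup X_{b_1}$. Some vertex of $X_{u'}$ must lie between them, since $X_{b_1}$ and $X_{b_2}$ are anticomplete.

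Finally, your symmetry remark has $p$ and $q$ swapped. The case $u\in A$ uses $p\ge 2$ to find $u'$, so it is the case $u\in B$ that uses $q\ge 2$.
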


\begin{proof}
    One direction is trivial: if $G$ contains $K_{p,q}$ as an induced subgraph, then $G$ contains $K_{p,q}$ as an induced minor.

    To prove the other direction, let $G$ be a graph with no holes of length at least $5$ containing $K_{p,q}$ as an induced minor.
    Let $\mathbb{X} = \{A_1,\ldots , A_p, B_1,\ldots , B_q\}$ be a minimal induced minor model of $K_{p,q}$ in $G$ with a bipartition $(\mathcal{A}, \mathcal{B})$ where $\mathcal{A} := \{A_1,\ldots , A_p\}$ and $\mathcal{B} := \{B_1,\ldots ,B_q\}$.
    If every bag of $\mathbb{X}$ contains a single vertex, then we are done, so there is a bag of $\mathbb{X}$ containing at least two vertices.
    Without loss of generality, suppose that $|A_1|\geq 2$. 
    For each $u\in A_1$, let $\phi(u)=|\{B\in \mathcal{B}\mid N(u)\cap B\neq \emptyset \}|$ be the number of bags of $\mathcal{B}$ that contain a vertex adjacent to $u$.
    Let $u$ be a vertex of $A_1$ maximizing $\phi(u)$.

    We now prove that we can find an induced cycle of $G$ of length at least $5$, which is a contradiction.

    By the minimality of $\mathbb{X}$, there must be a bag $B_i\in \mathcal{B}$ such that $N(u)\cap B_i=\emptyset$.
    Without loss of generality, let $i = q$.
    It follows that there exist a vertex in $A_1$, distinct from $u$, adjacent to $B_q$.
    Choose a vertex $v\in A_1\cap N(B_q)$ such that the distance between $u$ and $v$ in $A_1$ is minimized, and let $P$ be a shortest $u$-$v$ path in $A_1$.
    Note that $v$ is the only vertex of $P$ adjacent to $B_q$.

    By the maximality of $\phi(u)$, there exists $j\in \{1,\ldots,q-1\}$ such that $u$ is adjacent to $B_j$ while $v$ is not adjacent to $B_j$.
    Let $w\neq v$ be the vertex on $P$ adjacent to $B_j$ that minimizes the distance to $v$ along $P$.
    Note that $u=w$ is also possible.
    Then we can find a $v$-$w$ path $Q$ whose internal vertices are contained in  $B_j\cup A_2\cup B_q$.
    Concatenating $Q$ with the subpath of $P$ between $v$ and $w$ yields an induced cycle of length at least $5$, which is contradiction.
    Therefore, we conclude that each bag of $\mathbb{X}$ contains exactly one element, so the vertices contained in the induced minor model of $K_{p,q}$ in $G$ also induces $K_{p,q}$ as an induced subgraph.
\end{proof}

Theorem~\ref{thm:no_long_hole_Kpq} immediately implies the following.

\begin{corollary}
    \label{cor:no_K2d_induced_minor}
    Let $G$ be a $K_{1,d}$-free graph containing no holes of length at least $5$. Then $G$ is a \hbox{$K_{2,d}$-induced-minor-free graph}.
\end{corollary}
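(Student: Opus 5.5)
The plan is to argue by contradiction and reduce to \cref{thm:no_long_hole_Kpq} with $p=2$ and $q=d$. Suppose $G$ is a $K_{1,d}$-free graph with no holes of length at least $5$, and suppose $G$ contains $K_{2,d}$ as an induced minor. Since $d\ge 2$ (for $d=1$ the graph $G$ is edgeless and has no induced minor with an edge, so the statement is trivial), the hypotheses $p,q\ge 2$ of \cref{thm:no_long_hole_Kpq} are met. That theorem then gives that $G$ contains $K_{2,d}$ as an induced subgraph.

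The remaining step is to find an induced $K_{1,d}$ inside this induced $K_{2,d}$. Let the bipartition of the induced copy be $\{a_1,a_2\}$ and $\{b_1,\dots,b_d\}$. Because the copy is induced, the vertices $b_1,\dots,b_d$ are pairwise nonadjacent in $G$, and each of them is adjacent to $a_1$. So $G[\{a_1,b_1,\dots,b_d\}]$ is isomorphic to $K_{1,d}$. This contradicts the assumption that $G$ is $K_{1,d}$-free, so $G$ is $K_{2,d}$-induced-minor-free.

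There is no real obstacle, since all the work is done by \cref{thm:no_long_hole_Kpq}. The only points to check are the small-$d$ edge case and that inducedness in $G$ is used, so that the $b_i$ are genuinely pairwise nonadjacent.
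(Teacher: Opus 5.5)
Your proposal is correct and follows the paper's proof: you apply \cref{thm:no_long_hole_Kpq} with $p=2$, $q=d$ to obtain an induced $K_{2,d}$, whose induced $K_{1,d}$ contradicts $K_{1,d}$-freeness. Your separate treatment of $d=1$ and your check that the star inside the induced $K_{2,d}$ is itself induced only make explicit what the paper leaves implicit.
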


\begin{proof}
    Suppose to the contrary that $G$ contains a $K_{2,d}$ as an induced minor. Then, by Theorem~\ref{thm:no_long_hole_Kpq} $G$ contains a $K_{2,d}$ as an induced subgraph, a contradiction since $G$ is $K_{1,d}$-free.
\end{proof}

Before we prove our final result of this section, we need to state the following result by Dallard, Milanič, and Štorgel~\cite{dallard2024treewidthversuscliquenumber3}.

\begin{theorem}[\cite{dallard2024treewidthversuscliquenumber3}]
    \label{thm:alpha_tw_K2q_induced_minor_free}
    For every integer $q\ge 2$ and every $K_{2,q}$-induced-minor-free graph $G$, the tree-independence number of $G$ is at most $2q - 2$.
\end{theorem}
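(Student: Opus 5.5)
This statement is the one from \cite{dallard2024treewidthversuscliquenumber3}, so within the paper it can simply be cited. Still, here is how I would prove it from scratch. The overall strategy is to exploit a standard fact: tree-independence number is bounded as soon as every pair of nonadjacent vertices has a minimal separator of small independence number.

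The plan follows the minimal-separator approach of Dallard, Milanič, and Štorgel.

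\textbf{Step 1 (reduction to minimal separators).} I would use the known lemma that if every minimal separator $S$ of $G$ satisfies $\alpha(S)\le k$, then $\atw(G)$ is bounded by a function of $k$. The route is to take a minimal triangulation $H$ of $G$. Its maximal cliques are potential maximal cliques of $G$, and each such clique $\Omega$ is covered by $\bigcup_i N(C_i)\cap\Omega$ over the components $C_i$ of $G-\Omega$, together with edges of $G$. I would bound $\alpha(G[\Omega])$ in terms of the independence numbers of the minimal separators $N(C_i)$. This recovers the dependence $2q-2$ if done carefully, e.g.\ $\alpha(\Omega)\le 2(q-1)$.

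\textbf{Step 2 (small minimal separators).} Let $S$ be a minimal separator with two full components $C_1,C_2$, so every vertex of $S$ has neighbours in both. Suppose $I\subseteq S$ is independent with $|I|\ge q$. Contract $C_1$ and $C_2$ to single vertices $c_1,c_2$ and delete everything else except $I$. Then $c_1,c_2$ are nonadjacent, each is adjacent to all of $I$, and $I$ is independent. This gives $K_{2,q}$ as an induced minor, a contradiction. Hence $\alpha(S)\le q-1$ for every minimal separator.

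\textbf{Step 3 (potential maximal cliques).} For a potential maximal clique $\Omega$, I would show $\alpha(\Omega)\le 2q-2$. Take an independent set $I\subseteq\Omega$. Every nonadjacent pair in $\Omega$ is covered by a common separator $N(C)$ for some component $C$ of $G-\Omega$. I would pick one vertex $x\in I$ and consider the components $C$ with $x\in N(C)$. Contracting such a $C$ onto $x$ yields a vertex adjacent to much of $I$. Combined with the separator bound this gives $|I|\le (q-1)+(q-1)$.

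\textbf{Main obstacle.} The main difficulty is Step 3, i.e.\ controlling $\alpha$ of a potential maximal clique rather than of a separator. Concretely, the task is to find a component $C$ and a vertex $x$ such that $N(C)\cup\{x\}$ absorbs half of $I$, so that two branch sets each see $q$ independent vertices. I would first try choosing $C$ maximizing $|N(C)\cap I|$ and arguing that the remaining vertices of $I$ all lie in the neighbourhood of the contracted set $C\cup\{x\}$.
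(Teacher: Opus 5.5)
The paper does not prove this statement; it imports it from Dallard--Milani\v{c}--\v{S}torgel, so your sketch has to stand on its own. It does not, because all the difficulty sits in Step 3, which you leave open.

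Step 2 is correct. Its converse also holds: contract the two hubs of a $K_{2,q}$-model and take a minimal separator between them; it meets every leaf. So ``every minimal separator has $\alpha\le q-1$'' is \emph{equivalent} to being $K_{2,q}$-induced-minor-free. The ``known lemma'' of Step 1 is therefore not an independent tool but a restatement of the theorem. What actually has to be proved is $\alpha(G[\Omega])\le 2q-2$ for every potential maximal clique $\Omega$. Since $G-\Omega$ can have unboundedly many components, no union bound over the separators $N(C_i)$ gives this.

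The mechanism you suggest for Step 3 cannot work. Take $x\in I$ and a component $C$ with $x\in N(C)$. Because $I$ is independent, $N(C\cup\{x\})\cap I=(N(C)\cap I)\setminus\{x\}$, which has at most $q-2$ elements by Step 2. The vertices of $I\setminus N(C)$ are never in the neighbourhood of $C\cup\{x\}$, however $C$ is chosen.

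Suppose instead you contract \emph{all} components $C$ with $x\in N(C)$ together with $x$ into a set $A$. The covering property of potential maximal cliques gives $N(A)=\Omega\setminus\{x\}$. Then any connected set anticomplete to $A$ lies inside a single component $C'$ of $G-\Omega$. Every leaf of a $K_{2,q}$-model with these two hubs is connected and meets both $\Omega\setminus\{x\}$ and $C'\cup N(C')$, so it must meet $N(C')$. Hence there are at most $\alpha(N(C'))\le q-1$ leaves.

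So the two hubs must be built from different parts of $G-\Omega$, for instance around two vertices $x,y\in I$. Any component whose neighbourhood contains both $x$ and $y$ can then go into neither hub. Controlling the vertices of $I$ that are reachable from $x$ or $y$ only through such shared components is exactly the missing argument, and the proposal gives no way to handle it.
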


Using Theorem~\ref{thm:no_long_hole_Kpq} together with Theorem~\ref{thm:alpha_tw_K2q_induced_minor_free}, we are now ready to state out result which is a generalisation of Theorem~\ref{thm:atw_K1d-free_longest_cycle} in the case when $\ell = 4$.

\begin{theorem}
    \label{thm:K2d-main}
    Let $d\ge 2$ be an integer. Let $G$ be a $K_{2,d}$-free graph containing no holes of length at least $5$. Then $\atw (G) \leq 2d-2$.
\end{theorem}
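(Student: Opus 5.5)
The proof is a direct combination of \cref{thm:no_long_hole_Kpq} and \cref{thm:alpha_tw_K2q_induced_minor_free}.

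Let $d\ge 2$ and let $G$ be a $K_{2,d}$-free graph with no holes of length at least $5$. First we show that $G$ is $K_{2,d}$-induced-minor-free. Suppose instead that $G$ contains $K_{2,d}$ as an induced minor. Since $G$ has no hole of length at least $5$ and $p=2$, $q=d\ge 2$, \cref{thm:no_long_hole_Kpq} gives that $G$ contains $K_{2,d}$ as an induced subgraph. This contradicts the assumption that $G$ is $K_{2,d}$-free.

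Next we apply \cref{thm:alpha_tw_K2q_induced_minor_free} with $q=d\ge 2$. It gives $\atw(G)\le 2d-2$, which is the claimed bound.

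The one step that needs care is checking that the hypotheses of \cref{thm:no_long_hole_Kpq} really hold here. That theorem needs only the absence of holes of length at least $5$ and $p,q\ge 2$; it does not use the $K_{1,d}$-freeness that appears in \cref{cor:no_K2d_induced_minor}. Our hypotheses supply exactly these conditions, so the theorem applies.

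This also shows that \cref{thm:K2d-main} generalises the case $\ell=4$ of \cref{thm:atw_K1d-free_longest_cycle}. Every $K_{1,d}$-free graph is $K_{2,d}$-free, since $K_{2,d}$ contains an induced $K_{1,d}$. Hence the bound $2d-2$ here extends the bound $(d-1)(4-2)=2d-2$ from $K_{1,d}$-free graphs to all $K_{2,d}$-free graphs with no long holes.
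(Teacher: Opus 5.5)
Your proposal is correct and follows the paper's proof exactly. \cref{thm:no_long_hole_Kpq} turns $K_{2,d}$-freeness into $K_{2,d}$-induced-minor-freeness, and \cref{thm:alpha_tw_K2q_induced_minor_free} with $q=d$ then gives $\atw(G)\le 2d-2$.
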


\begin{proof}
    Let $G$ be a $K_{2,d}$-free graph containing no holes of length at least $5$. By Theorem~\ref{thm:no_long_hole_Kpq}, since~$G$ is $K_{2,d}$-free, $G$ contains no $K_{2,d}$ as an induced-minor. It follows, by Theorem~\ref{thm:alpha_tw_K2q_induced_minor_free}, that $G$ has tree-independence number at most $2d-2$ as claimed.
\end{proof}

Note that \cref{thm:K2d-main} generalises both \cref{no_long_hole_tolerance} and \cref{no_long_hole_co-comparability}, and more generally, it generalises \cref{thm:atw_K1d-free_longest_cycle} in the case when $\ell = 4$.



\section{Open problems}
\label{sec:conclusion}

We conclude by presenting several open questions.
First, observe that the bounds on tree-independence number shown in \cref{sec:without_long_cycle} and \cref{sec:outerstring} are given in polynomial in $d$.
Thus we may ask the following strengthening of \cref{conj:inducedgridequiv} that whether the bound on tree-independence number of a $K_{1,d}$-free graph class without a grid as an induced minor is always given by a polynomial.

\begin{conjecture}
    Is there a polynomial $f$ such that for any integer $d\geq 2$, if a $K_{1,d}$-free graph $G$ does not contain a $d\times d$ grid as an induced minor, then $\atw(G)\leq f(d)$?
\end{conjecture}

Comparing width-parameters in various graph classes play an important role both in structural and algorithmic graph theory.
In \cite{BMPY2023}, authors verified relationships between width parameters of several graph classes, where the only open case is relation between tree-independence number and sim-width in the $K_{t,t}$-free graph class.
One may ask a weaker version of above question by forbidding $K_{1,d}$ instead of $K_{t,t}$.

\begin{conjecture}\label{ques:sim-width}
    For each $d \geq 2$, does a $K_{1,d}$-free graph class with bounded sim-width have bounded tree-independence number?
\end{conjecture}

Another interesting width parameter to compare is twin-width.
In \cite{AJKPRW2025}, authors proved that the graphs with twin-width $1$ are permutation graphs.
Hence by \cref{thm:outerstring}, the class of $K_{1,d}$-free graphs with twin-width~$1$ has bounded tree-independence number.
However, an $n\times n$ grid where $n\geq 7$ has twin-width $4$, so the class of graphs with twin-width at most $4$ has unbounded tree-independence number.
Therefore, one may ask what happens in the case of classes of graphs of twin-width $2$ or $3$.

\begin{conjecture}\label{ques:twin-width}
    For each $d \geq 2$, does a $K_{1,d}$-free graph class with twin-width at most $3$ have bounded tree-independence number?
\end{conjecture}


\section{Acknowledgements}
This work was done while visiting University of Primorska, FAMNIT, Slovenia for the Koper-Leipzig Graph Theory Workshop in 2025.

\bibliographystyle{alphaurl}
\bibliography{Reference}

\end{document}